\DeclarePairedDelimiter\ceil{\lceil}{\rceil}
\newcommand\restr[2]{{% we make the whole thing an ordinary symbol
  \left.\kern-\nulldelimiterspace % automatically resize the bar with \right
  #1 % the function
  \vphantom{\big|} % pretend it's a little taller at normal size
  \right|_{#2} % this is the delimiter
  }}
\def\BState{\State\hskip-\ALG@thistlm}
\numberwithin{equation}{section}
\newtheorem{theorem}{Theorem}[section]
\newtheorem{lem}[theorem]{Lemma}
\newtheorem{prop}[theorem]{Proposition}
\newtheorem{rem}[theorem]{Remark}
\newtheorem{obs}[theorem]{Observation}
\title{ On the spectral radius of bi-block graphs with  given \\ independence number $\alpha$}
\author{Joyentanuj Das$^*$   \quad and \quad Sumit Mohanty\footnote{School of Mathematics, IISER Thiruvananthapuram, Maruthamala P.O., Vithura, 
Thiruvananthapuram,\newline \indent   Kerala- 695 551, India.
 \newline \indent Emails:
joyentanuj16@iisertvm.ac.in,  \ sumit@iisertvm.ac.in, sumitmath@gmail.com }}
\date{}
\begin{document}

\maketitle

\begin{abstract}
A connected graph is called a bi-block graph if each of its blocks is a complete bipartite graph. Let $\mathcal{B}(\mathbf{k}, \alpha)$ be the class of bi-block graph on $\mathbf{k}$ vertices with  given independence number $\alpha$. It is easy to see that every bi-block graph is a bipartite graph. For a bipartite graph $G$ on $\mathbf{k}$ vertices, the independence number  $\alpha(G)$ satisfies $\ceil*{\frac{\mathbf{k}}{2}} \leq \alpha(G) \leq \mathbf{k}-1$.  In this article, we prove that the maximum spectral radius $\rho(G)$ among  all graphs $G$ in $\mathcal{B}(\mathbf{k}, \alpha)$,  is uniquely attained for the complete bipartite graph $K_{\alpha, \mathbf{k}-\alpha}$.

\end{abstract}

\noindent {\sc\textsl{Keywords}:} complete bipartite graphs, bi-block graphs, independence number, spectral radius.

\noindent {\sc\textbf{MSC}:}   05C50, 15A18

\section{Introduction}
Let $G=(V(G),E(G))$  be a finite, simple, connected graph with $V(G)$ as the set of vertices and $E(G)$ as the set of edges in $G$. We simply write $G=(V,E)$ if there is no scope of confusion. We write $u\sim v$ to indicate that the vertices $u$ and $v$ are adjacent in $G$. For $v\in V$, the degree of the vertex $v$ is denoted by $d_G(v)$, equals the number of vertices in $G$ that are adjacent to $v$. A graph $H$ is said to be a subgraph of $G$ if $V(H) \subset V(G)$ and $E(H) \subset E(G)$. For any subset $S \subset V (G)$, a subgraph $H$ of $G$ is said to be an induced subgraph with vertex set $S$ if $H$ is a maximal subgraph of $G$ with vertex set $V(H)=S$. We use $|S|$ to denote the cardinality of the set $S$. A graph $G=(V,E)$ is said to be bipartite if the vertex set $V$ can be partitioned into two subsets $M$ and $N$ such that  $E\subset M\times N$. A bipartite graph with vertex partitions $M$ and $N$  is said to be complete bipartite graph if every vertex of $M$ is adjacent to every vertex of $N$ and moreover, if $|M| = m$  and $|N| = n$, then the  complete bipartite graph is denoted by  $K_{m,n}$.  To emphasize the vertex  partition $ M$ and  $N$, we  use  $K(M,N)$ to represent $K_{m,n}$ whenever $|M|=n$ and $|N|=n$.

 A vertex $v$ of a connected graph $G=(V,E)$ is a cut-vertex of $G$ if $G - v$ is disconnected. A block of the graph $G$ is a maximal connected subgraph of G that has no cut-vertex. A block is said to be a leaf block if its deletion does not disconnect the graph. Given two blocks  $F$ and $H$  of graph $G$ are said to be neighbours if they are connected via a cut-vertex.  We write $F\circledcirc H$ to represent the induced subgraph on the vertex set of two neighbouring blocks $F$ and $H$.  A connected graph is called a bi-block graph if each of its blocks is a complete bipartite graph (see Figure~\ref{fig:bi-block}).  For $v\in V$, the  block index of $v$ is denoted by $bi_G(v)$,  equals the number of blocks in $G$ that  contain the vertex  $v$. It is easy to see that if $v$ is not a cut-vertex, then  $bi_G(v)=1$. Also note that, the star  $K_{1,n}$ is bi-block graph with a central cut-vertex $v(say)$, with $bi_G(v)=d_G(v)=n$ and each of its blocks are edges. In this article, we consider the star $K_{1,n}$ as a complete bipartite graph instead of a bi-block graph.

 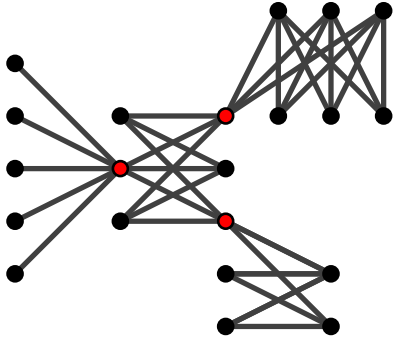
\begin{figure}[ht]
\centering
\begin{tikzpicture}[scale=0.7]
\Vertex[size=.1,color=black]{A} 
\Vertex[y=1,size=.1,color=red]{B} 
\Vertex[y=2,size=.1,color=black]{C}
\Vertex[x=2,y=1,size=.1,color=black]{F} 
\Vertex[x=2,y=2,size=.1,color=red]{G}
\Vertex[x=2,size=.1,color=red]{H}
\Vertex[x=2,y=-1,size=.1,color=black]{I} 
\Vertex[x=3,y=2,size=.1,color=black]{G1}
\Vertex[x=4,y=2,size=.1,color=black]{G2}
\Vertex[x=5,y=2,size=.1,color=black]{G3}
\Vertex[x=3,y=4,size=.1,color=black]{H1}
\Vertex[x=4,y=4,size=.1,color=black]{H2}
\Vertex[x=5,y=4,size=.1,color=black]{H3}
\Vertex[x=2,y=-2,size=.1,color=black]{I1} 
%\Vertex[x=2,y=-3,size=.1,color=black]{I2}
\Vertex[x=4,y=-1,size=.1,color=black]{J1} 
\Vertex[x=4,y=-2,size=.1,color=black]{J2}
\Vertex[x=-2,y=-1,size=.1,color=black]{K}
\Vertex[x=-2,size=.1,color=black]{K1}
\Vertex[x=-2,y=1,size=.1,color=black]{K2}
\Vertex[x=-2,y=2,size=.1,color=black]{K3}
\Vertex[x=-2,y=3,size=.1,color=black]{K4}

\Edge[lw=2pt](A)(F)
\Edge[lw=2pt](A)(G)
\Edge[lw=2pt](A)(H)
\Edge[lw=2pt](B)(F)
\Edge[lw=2pt](B)(G)
\Edge[lw=2pt](B)(H)
\Edge[lw=2pt](C)(F)
\Edge[lw=2pt](C)(G)
\Edge[lw=2pt](C)(H)
\Edge[lw=2pt](J1)(H)
\Edge[lw=2pt](J1)(I)
\Edge[lw=2pt](J1)(I1)
%\Edge[lw=2pt](J1)(I2)
\Edge[lw=2pt](J2)(H)
\Edge[lw=2pt](J2)(I)
\Edge[lw=2pt](J2)(I1)
%\Edge[lw=2pt](J2)(I2)
\Edge[lw=2pt](J1)(H)
\Edge[lw=2pt](J1)(I)
\Edge[lw=2pt](J1)(I1)
\Edge[lw=2pt](G)(H1)
\Edge[lw=2pt](G1)(H1)
\Edge[lw=2pt](G2)(H1)
\Edge[lw=2pt](G3)(H1)
\Edge[lw=2pt](G)(H2)
\Edge[lw=2pt](G1)(H2)
\Edge[lw=2pt](G2)(H2)
\Edge[lw=2pt](G3)(H2)
\Edge[lw=2pt](G)(H3)
\Edge[lw=2pt](G1)(H3)
\Edge[lw=2pt](G2)(H3)
\Edge[lw=2pt](G3)(H3)
\Edge[lw=2pt](B)(K)
\Edge[lw=2pt](B)(K1)
\Edge[lw=2pt](B)(K2)
\Edge[lw=2pt](B)(K3)
\Edge[lw=2pt](B)(K4)

\end{tikzpicture}
\caption{ bi-block graph  with blocks $K_{1,5}-K_{3,3}-K_{4,3}-K_{3,2}$} \label{fig:bi-block}
\end{figure}
 
A set $\mathcal{I}$ of vertices in a graph $G$ is an independent set if no pair of vertices of $\mathcal{I}$ are adjacent. The independence number of $ G$ is denoted by $\alpha(G)$,  equals the cardinality of the largest independent set in $G$. An independent set of cardinality $\alpha(G)$ is called an $\alpha(G)$-set. 

Let $G=(V,E)$ be a graph. The adjacency matrix  of $G$ is a $|V| \times |V|$ matrix $A(G) = [a_{xy}]_{x,y \in V}$, where $a_{xy}= 1 $  if $x\sim y$ and $0$ otherwise. For any column vector $X$ of order $|V|$, if $x_u$  represent the entry of $X$ corresponding to the vertex $u\in V$, then 
 $$X^t A(G)X=  2\sum_{u\sim w}x_{u}x_{w}, $$
 where $X^t$ denotes  the transpose of  $X$. For a connected graph $G$    on $\mathbf{k}\geq 2$ vertices, by Perron-Frobenius theorem, the spectral radius  $\rho(G)$ of $A(G)$ is a simple positive eigenvalue and  the associated eigenvector is entry-wise positive (for details see~\cite{Bapat}). We will refer to such an eigenvector as the
Perron vector of $G$. We simply write  $A$ and $\rho$ to denote the adjacency matrix and spectral radius, if there is no scope of confusion. Now we state a few known results on spectral radius useful in our subsequent calculations. By Min-max theorem, we have
$$\rho(G) = \max_{X\neq 0}\dfrac{X^t A(G) X}{X^tX}= \max_{X\neq 0}\dfrac{2\sum_{u\sim w}x_{u}x_{w}}{\sum_{u\in V}x_{u}^2}.$$
The following lemma gives a relation between spectral radius and   degree of vertices.
\begin{lem}\label{lem:sr_deg}\cite{Bapat}
For a graph $G$, if $\Delta(G)$ and $\delta(G)$ denote the maximum and the minimum of the vertex degrees of $G$, respectively,  then 
$\delta(G) \leq \rho(G) \leq \Delta(G)$.
\end{lem}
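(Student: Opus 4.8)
The plan is to prove the two inequalities separately, leaning on the two characterizations of $\rho(G)$ already recorded above: the Rayleigh-quotient (min-max) formula for the lower bound, and the Perron vector for the upper bound. Both arguments are short and elementary, so the task is mainly to organize them cleanly.

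For the lower bound $\delta(G) \le \rho(G)$, I would simply feed the all-ones vector $\1$ into the min-max theorem. Since $\1^t A(G)\1 = 2\sum_{u\sim w} 1 = \sum_{v\in V} d_G(v)$ and $\1^t\1 = |V|$, the variational characterization gives
$$\rho(G) \;\ge\; \frac{\1^t A(G)\1}{\1^t\1} \;=\; \frac{1}{|V|}\sum_{v\in V} d_G(v),$$
which is the average degree of $G$. Since the average of the degrees is at least their minimum, this quantity is at least $\delta(G)$, yielding $\rho(G)\ge \delta(G)$.

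For the upper bound $\rho(G)\le \Delta(G)$, I would use the Perron vector $X>0$ satisfying $A(G)X=\rho(G)X$, whose entrywise positivity for a connected graph was noted above via Perron-Frobenius. Let $u\in V$ be a vertex at which $X$ attains its maximum entry, i.e. $x_u=\max_{w\in V}x_w>0$. Reading off the $u$-th coordinate of the eigenvalue equation gives $\rho(G)\,x_u = \sum_{w\sim u} x_w$. Bounding each summand by $x_w\le x_u$ and noting there are exactly $d_G(u)$ terms yields
$$\rho(G)\,x_u \;=\; \sum_{w\sim u} x_w \;\le\; d_G(u)\,x_u \;\le\; \Delta(G)\,x_u.$$
Cancelling the strictly positive factor $x_u$ gives $\rho(G)\le\Delta(G)$, completing the proof.

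There is no serious obstacle in this argument; it is a classical fact. The only points requiring a little care are invoking the strict positivity of the Perron vector (so that dividing by $x_u$ is legitimate) and the existence of a maximizing vertex (immediate since $V$ is finite). One could alternatively derive the upper bound purely from the min-max formula via a Cauchy--Schwarz estimate, but the Perron-vector route is cleaner and directly exploits the structure already set up in the excerpt.
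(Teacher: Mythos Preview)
Your argument is correct and is exactly the standard textbook proof of this classical bound. Note, however, that the paper does not supply its own proof of this lemma: it is stated with a citation to \cite{Bapat} and used as a black box, so there is no in-paper argument to compare against. Your write-up would serve perfectly well as the omitted proof, and it is consistent with the tools the paper has already set up (the Rayleigh quotient and the Perron vector).
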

Given a graph $G=(V,E)$, for $x, y \in V(G)$ we will use  $G + xy$ to denote the graph obtained from $G$  by adding an edge $xy \notin E(G)$ and we have the following result.
\begin{lem}\label{lem:sr_edge}\cite{Bapat,Li}
If $G$ is a connected graph such that for $x, y \in V(G)$, $xy \notin E(G)$, then $\rho(G) < \rho(G+xy)$. 
\end{lem}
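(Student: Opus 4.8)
The plan is to exploit the variational (Rayleigh--Ritz / Min-max) characterization of the spectral radius together with the strict positivity of the Perron vector guaranteed by connectedness. First I would invoke the Perron--Frobenius theorem to produce the Perron vector $X$ of $G$: since $G$ is connected on $\mathbf{k}\geq 2$ vertices, $X$ is entry-wise positive, and I normalize it so that $X^tX=1$. Writing $x_v$ for the entry of $X$ at the vertex $v$, both $x_x$ and $x_y$ are then strictly positive.

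Next I would compare the two quadratic forms. The adjacency matrices of $G$ and $G+xy$ differ only in the two symmetric positions corresponding to the new edge, so $A(G+xy)=A(G)+E$, where $E$ has ones in positions $(x,y)$ and $(y,x)$ and zeros elsewhere. Evaluating the quadratic form of $A(G+xy)$ at the Perron vector $X$, and using $A(G)X=\rho(G)X$ together with $X^tX=1$, gives
$$X^t A(G+xy)X = X^t A(G)X + 2\,x_x x_y = \rho(G) + 2\,x_x x_y.$$
Finally I would apply the Min-max theorem to $G+xy$: since $\rho(G+xy)=\max_{Y\neq 0}\frac{Y^t A(G+xy)Y}{Y^tY}$ and $X$ is an admissible test vector with $X^tX=1$, we obtain $\rho(G+xy)\geq X^t A(G+xy)X = \rho(G)+2\,x_x x_y$. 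The strict inequality $\rho(G)<\rho(G+xy)$ then follows from $x_x x_y>0$.

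The argument is short, and the only delicate point --- the place where connectedness is genuinely used --- is the strict positivity of $x_x$ and $x_y$. For a general (possibly disconnected) graph the Perron vector could vanish at $x$ or $y$, and the estimate above would only yield a weak inequality; it is precisely the Perron--Frobenius guarantee of an entry-wise positive eigenvector for a connected graph that upgrades $\geq$ to $<$. This is the main thing to be careful about, while the remaining manipulation of the Rayleigh quotient is routine.
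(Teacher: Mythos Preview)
Your argument is correct and is the standard proof of this fact: use the strictly positive Perron vector of the connected graph $G$ as a test vector in the Rayleigh quotient for $A(G+xy)$, picking up the extra term $2x_xx_y>0$. The paper itself does not supply a proof of this lemma at all; it is quoted as a known result with citations to \cite{Bapat,Li}, so there is nothing in the paper to compare your approach against beyond noting that the Min-max characterization and Perron--Frobenius positivity you invoke are exactly the tools the paper records just before stating the lemma.
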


Problems related to maximization and minimization of spectral radius for a given class of graph is an active area in spectral graph theory and has been extensively studied (for example see \cite{Cri}-\cite{Xu}).  In particular, a few interesting articles related to maximization of the spectral radius with given independence number has been studied for trees and block graphs (for details see~\cite{Cri,Chu}). We are interested in maximizing spectral radius for bi-block graphs with given independence number.  Let $\mathcal{B}(\mathbf{k}, \alpha)$ be the class of bi-block graphs on $\mathbf{k}$ vertices with a given independence number $\alpha$. In this article, we prove that the maximum spectral radius $\rho(G)$, among  all graphs $G$ in $ \mathcal{B}(\mathbf{k}, \alpha)$  is uniquely attained for the complete bipartite graph $K_{\alpha, \mathbf{k}-\alpha}$.

\section{Main Results}

We begin with a few results that gives us some insight to dependency of the independence number of a bi-block graph and its leaf blocks. It is easy to see, if $G$ is a bipartite graph with vertex partition $M$ and $N$, then $\alpha(G)= \max\{|M|, |N|\}$. Since every bi-block graph is a bipartite graph, so given a bi-block graph $G$   on $\mathbf{k}$ vertices, the independence number $\alpha(G)$,  satisfies $\ceil*{\frac{\mathbf{k}}{2}} \leq \alpha(G) \leq \mathbf{k}-1$. 

Let $G$ be a  bi-block graph.  Let $H$  be any leaf block connected to the graph $G$ at a cut-vertex $v \in V(G)$ and $G-H$ be the graph obtained from $G$ by removing $H - v$. Given an $\alpha(G)$-set  $\mathcal{I}$,   we denote $$\restr{\mathcal{I}}{G-H} = \{u \in \mathcal{I} \mid u \in V(G-H) \}.$$ 
Note that, $\restr{\mathcal{I}}{G-H}$ is an independent set of the graph $G-H$ which need not be an  $\alpha(G-H)$-set and hence $\left|\restr{\mathcal{I}}{G-H}\right| \leq \alpha(G-H)$. Before providing a result that gives us a relation between  $\restr{\mathcal{I}}{G-H}$  and an $\alpha(G-H)$-set, we first prove the following lemma.
\begin{lem}\label{lem:2.1}
Let $G$ be a graph and $\mathcal{J}$ be an  $\alpha(G)$-set. For 
$v\in \mathcal{J}$,  if  $\mathcal{I}$ is an $\alpha(G-v)$-set and  $\mathcal{I}$ is not an $\alpha(G)$-set,  then $ |\mathcal{J}|= |\mathcal{I}| +1.$
\end{lem}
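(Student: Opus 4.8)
The plan is to pin down $\alpha(G-v)$ by sandwiching it between $\alpha(G)-1$ and $\alpha(G)$, and then rule out the upper value using the hypothesis that $\mathcal{I}$ is not an $\alpha(G)$-set. First I would observe that since $v\in\mathcal{J}$ and $\mathcal{J}$ is independent in $G$, the set $\mathcal{J}\setminus\{v\}$ is an independent set in $G-v$; hence $\alpha(G-v)\ge |\mathcal{J}|-1 = \alpha(G)-1$. Conversely, every independent set of $G-v$ is also an independent set of $G$ (deleting a vertex cannot create adjacencies), so $\alpha(G-v)\le \alpha(G)$. Combining, $\alpha(G)-1\le \alpha(G-v)\le \alpha(G)$, so $\alpha(G-v)$ equals either $\alpha(G)$ or $\alpha(G)-1$.

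Next I would eliminate the case $\alpha(G-v)=\alpha(G)$. If this held, then $\mathcal{I}$, being an $\alpha(G-v)$-set, would satisfy $|\mathcal{I}|=\alpha(G-v)=\alpha(G)$; moreover $\mathcal{I}$ is independent in $G-v$ and therefore independent in $G$. A set of size $\alpha(G)$ that is independent in $G$ is by definition an $\alpha(G)$-set, contradicting the hypothesis. Hence $\alpha(G-v)=\alpha(G)-1$, which gives $|\mathcal{I}| = \alpha(G-v) = \alpha(G)-1 = |\mathcal{J}|-1$, i.e. $|\mathcal{J}| = |\mathcal{I}|+1$, as claimed.

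There is no real obstacle here; the argument is elementary once one records the two inequalities bounding $\alpha(G-v)$ and uses the non-maximality of $\mathcal{I}$ in $G$ to discard the larger value. The only point to be careful about is the logical direction in the second step: it is the assumed equality $\alpha(G-v)=\alpha(G)$ (not $|\mathcal{I}|=|\mathcal{J}|$ directly) that forces $\mathcal{I}$ to be an $\alpha(G)$-set, so the contradiction must be drawn from that equality.
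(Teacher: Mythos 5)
Your proof is correct and follows essentially the same route as the paper: the lower bound comes from $\mathcal{J}\setminus\{v\}$ being independent in $G-v$, and the upper bound (strictness) comes from the hypothesis that $\mathcal{I}$, which is independent in $G$, is not an $\alpha(G)$-set. The paper phrases this directly as $|\mathcal{J}|-1\le|\mathcal{I}|<|\mathcal{J}|$ rather than via the intermediate quantity $\alpha(G-v)$, but the content is identical.
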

\begin{proof}
Since $\mathcal{I}$ is an  $\alpha(G-v)$-set and $\mathcal{J}-\{v\}$ is an independent set in $G-v$, so $|\mathcal{J}|-1 \leq |\mathcal{I}|.$ By hypothesis we have $|\mathcal{I}| < |\mathcal{J}|$ and hence $|\mathcal{J}|= |\mathcal{I}| +1.$\end{proof}

\begin{lem}\label{lem:ind_set}
Let $G$ be a  bi-block graph and  $H=K_{m,n}$  be any leaf block connected to the graph $G$ at a cut-vertex $v \in V(G)$ and $G-H$ be the graph obtained from $G$ by removing $H - v$. Let $\mathcal{I}$ be an $\alpha(G)$-set and $\restr{\mathcal{I}}{G-H}$ be defined as above. Then the following results hold.
\begin{enumerate}
\item[($i$)] If $v\in \mathcal{I},$ then $\restr{\mathcal{I}}{G-H}$ is an $\alpha(G-H)$-set.
\item[($ii$)] If $v\notin \mathcal{I}$ and $\restr{\mathcal{I}}{G-H}$ is not an $\alpha(G-H)$-set, then $v\in \mathcal{J}$ for all $\alpha(G-H)$-set $\mathcal{J}$.
\item[($iii$)] The independence number of $G-H$ is given by
$$
\alpha(G-H)=
\begin{cases}\vspace*{.2cm}
\left|\restr{\mathcal{I}}{G-H}\right| + 1 &  \textup{if } v \notin  \mathcal{I} \textup{ and } \restr{\mathcal{I}}{G-H}  \textup{ is not an }\alpha(G-H)\textup{-set},\\
\left|\restr{\mathcal{I}}{G-H}\right| & \text{otherwise}.
\end{cases}
$$
\end{enumerate}
\end{lem}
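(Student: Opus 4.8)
The plan is to handle the three parts in order, using Lemma~\ref{lem:2.1} as the main engine for parts (ii) and (iii), and the bipartite structure of the leaf block $H=K_{m,n}$ throughout. Write $H-v$ for the vertices of $H$ other than the cut-vertex $v$; say $v$ lies in the part of $H$ of size $m$ (so the side $v$ belongs to contributes $m-1$ further vertices, and the opposite side contributes $n$ vertices, with $m+n-1$ new vertices in all). The key structural observation I would record first is: any independent set $\mathcal{I}$ of $G$ restricted to the vertices of $H$ must, by bipartiteness of $H$, lie entirely in one side of $H$ (it cannot contain vertices from both sides, since every cross pair is an edge); consequently $\mathcal{I}\cap V(H)$ is a subset of one part of $H$, and one can always enlarge $\mathcal{I}\cap V(H)$ to a full part of $H$ without touching $\restr{\mathcal{I}}{G-H}$ except possibly at $v$.

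\emph{Part (i).} Suppose $v\in\mathcal{I}$. Then the side of $H$ containing the chosen $H$-vertices is forced to be the side containing $v$, so $\mathcal{I}\cap V(H)\subseteq \{v\}\cup(\text{$v$'s side})$ and in particular $\mathcal{I}\cap V(H-v)$ contributes at most $m-1$ vertices. I would argue $\restr{\mathcal{I}}{G-H}$ is an $\alpha(G-H)$-set by contradiction: if $\mathcal{J}'$ were a strictly larger independent set of $G-H$, then—depending on whether $v\in\mathcal{J}'$—one extends $\mathcal{J}'$ back into $H$ (adding either the $m-1$ non-$v$ vertices of $v$'s side when $v\in\mathcal{J}'$, or the full $n$-vertex opposite side when $v\notin\mathcal{J}'$, whichever is legal and at least as good as what $\mathcal{I}$ used in $H$) to produce an independent set of $G$ larger than $\mathcal{I}$, contradicting $\mathcal{I}$ being an $\alpha(G)$-set. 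The bookkeeping here is the one routine point: one must check the extension is genuinely independent in $G$ (it is, since the only attachment point of $H$ to the rest is $v$) and that $|\mathcal{I}\cap V(H)|$ is not larger than the piece we add back (which holds because $\mathcal{I}\cap V(H)$ sits in a single side and $v\in\mathcal{I}$ pins that side).

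\emph{Parts (ii) and (iii).} For (ii), assume $v\notin\mathcal{I}$ and $\restr{\mathcal{I}}{G-H}$ is \emph{not} an $\alpha(G-H)$-set. Apply Lemma~\ref{lem:2.1} with the roles: $G-H$ playing the part of ``$G$'', the cut-vertex $v$ playing ``$v$'', an arbitrary $\alpha(G-H)$-set $\mathcal{J}$ playing ``$\mathcal{J}$''—wait, this needs $v\in\mathcal{J}$, which is exactly what we want to prove. Instead I would argue directly: since $v\notin\mathcal{I}$, the set $\mathcal{I}$ uses the full opposite ($n$-vertex) side of $H$ (if it used fewer, swapping in all $n$ vertices of that side strictly enlarges $\mathcal{I}$, impossible), so $\left|\restr{\mathcal{I}}{G-H}\right| = \alpha(G) - n$. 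Now $(G-H)-v$ equals $G-v$ restricted appropriately, and since $v$ together with the $n$ opposite-side vertices of $H$ forms an independent set meeting $G-H$ only at $v$, one gets $\alpha(G)\ge \alpha\big((G-H)-v\big) + n + 1$, hence $\alpha((G-H)-v)\le \left|\restr{\mathcal{I}}{G-H}\right| - 1$. On the other hand $\restr{\mathcal{I}}{G-H}$ is an independent set of $(G-H)-v$ (it avoids $v$), so $\alpha((G-H)-v) = \left|\restr{\mathcal{I}}{G-H}\right|$ or $\left|\restr{\mathcal{I}}{G-H}\right|-1$; combined with the previous inequality it is exactly $\left|\restr{\mathcal{I}}{G-H}\right|-1$, while $\alpha(G-H)=\left|\restr{\mathcal{I}}{G-H}\right|+1$ in this case (shown below). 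So every $\alpha(G-H)$-set $\mathcal{J}$ satisfies $|\mathcal{J}| = \alpha((G-H)-v) + 2 > \alpha((G-H)-v)+1$, forcing $v\in\mathcal{J}$ (a $v$-avoiding independent set of $G-H$ has size $\le \alpha((G-H)-v)$). For (iii): in the ``otherwise'' cases, either $v\in\mathcal{I}$ (use part (i)) or $v\notin\mathcal{I}$ with $\restr{\mathcal{I}}{G-H}$ already an $\alpha(G-H)$-set, and both give $\alpha(G-H)=\left|\restr{\mathcal{I}}{G-H}\right|$ immediately; in the remaining case, apply Lemma~\ref{lem:2.1} to $G-H$ with its $\alpha(G-H)$-set $\mathcal{J}$ (which contains $v$ by part (ii)) and the $\alpha((G-H)-v)$-set $\restr{\mathcal{I}}{G-H}$ (which is an $\alpha((G-H)-v)$-set by the count above and is not an $\alpha(G-H)$-set by hypothesis) to conclude $\alpha(G-H) = \alpha((G-H)-v)+1 = \left|\restr{\mathcal{I}}{G-H}\right|+1$.

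The main obstacle I anticipate is part (ii): the clean way to invoke Lemma~\ref{lem:2.1} requires already knowing $v$ lies in the relevant optimal set, so one must instead extract the precise value of $\alpha((G-H)-v)$ by the explicit ``swap in a whole side of $H$'' exchange argument and only then feed it into Lemma~\ref{lem:2.1}; keeping straight the three nested graphs $G$, $G-H$, $(G-H)-v$ and which side of the bipartition of $H$ is forced in each case is where care is needed. Everything else is a short exchange-argument computation using that $H$ meets the rest of the graph only through $v$.
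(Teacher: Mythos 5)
Your part (i) is in substance the paper's argument, though note that the safe extension back into $H$ is \emph{always} $M\setminus\{v\}$ (these $m-1$ vertices have no neighbours outside $H$, so they can be adjoined to any independent set of $G-H$ whether or not it contains $v$); insisting on adding the opposite side $N$ when $v\notin\mathcal{J}'$ would lose the contradiction whenever $n<m-1$. The genuine gap is in part (ii). Your key inequality $\alpha(G)\ge\alpha\bigl((G-H)-v\bigr)+n+1$ rests on the claim that ``$v$ together with the $n$ opposite-side vertices of $H$ forms an independent set'' --- but $v$ is adjacent to \emph{every} vertex of the opposite side of $K_{m,n}$, so this set is not independent and the inequality has no basis. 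The conclusion you draw, $\alpha((G-H)-v)=\left|\restr{\mathcal{I}}{G-H}\right|-1$, is in fact impossible: $\restr{\mathcal{I}}{G-H}$ is itself an independent set of $(G-H)-v$ of that cardinality (you even note it avoids $v$), so $\alpha((G-H)-v)\ge\left|\restr{\mathcal{I}}{G-H}\right|$; and the resulting gap of $2$ between $\alpha(G-H)$ and $\alpha((G-H)-v)$ cannot occur, since deleting one vertex lowers the independence number by at most one. A further slip feeding into this: when $v\notin\mathcal{I}$ the trace $\mathcal{I}\cap V(H)$ equals $M\setminus\{v\}$ whenever $m-1>n$, not the opposite side, so $\alpha(G)=\left|\restr{\mathcal{I}}{G-H}\right|+\max\{m-1,n\}$ rather than $\left|\restr{\mathcal{I}}{G-H}\right|+n$.

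The repair is the direct exchange the paper uses, and it is shorter than your detour. Let $\mathcal{J}$ be any independent set of $G-H$ with $v\notin\mathcal{J}$. Since the only vertex of $G-H$ having a neighbour in $H-v$ is $v$ itself, both $\mathcal{J}\cup(M\setminus\{v\})$ and $\mathcal{J}\cup N$ are independent in $G$, whence $|\mathcal{J}|+\max\{m-1,n\}\le\alpha(G)=\left|\restr{\mathcal{I}}{G-H}\right|+\max\{m-1,n\}$, i.e.\ $|\mathcal{J}|\le\left|\restr{\mathcal{I}}{G-H}\right|$. Under the hypothesis of (ii) every $\alpha(G-H)$-set has size at least $\left|\restr{\mathcal{I}}{G-H}\right|+1$, so it must contain $v$: that is (ii). The same bound shows $\restr{\mathcal{I}}{G-H}$ is an $\alpha\bigl((G-H)-v\bigr)$-set, and then Lemma~\ref{lem:2.1} applied to $G-H$ yields $\alpha(G-H)=\left|\restr{\mathcal{I}}{G-H}\right|+1$, which is (iii). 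Your intended final appeal to Lemma~\ref{lem:2.1} is exactly the paper's; it is the two inputs to that lemma that must be established by the exchange above rather than by your count.
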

\begin{proof}
Let $H=K(M,N)$ where $|M|=m$, $|N|=n$ and  without loss of generality, let us assume $v\in M$. First, if $v\in \mathcal{I}$, then  $\alpha(G)=|\mathcal{I}|= \left|\restr{\mathcal{I}}{G-H}\right|+m-1$ and moreover,  $m\geq n$, else $\mathcal{L}= \left(\restr{\mathcal{I}}{G-H}- \{v\}\right)\cup N$ is an independent set in $G$ with $|\mathcal{L}|=\left|\restr{\mathcal{I}}{G-H}\right|+n-1 >\alpha(G)$, which is a contradiction.  In this case, if $\restr{\mathcal{I}}{G-H}$ is not an  $\alpha(G-H)$-set, then there exists  an  
independent set  $\mathcal{J}\subset V(G-H)$ such that $|\mathcal{J}| > \left|\restr{\mathcal{I}}{G-H}\right|.$ 
Thus, $\mathcal{L}= \mathcal{J}\cup (M-\{v\})$ is an independent set in $G$ and $|\mathcal{L}|=|\mathcal{J}|+m-1 > \left|\restr{\mathcal{I}}{G-H}\right| + m-1= \alpha(G),$  which is a contradiction. This proves part $(i).$ Next, let $v\notin \mathcal{I}$ and $\restr{\mathcal{I}}{G-H}$ is not an $\alpha(G-H)$-set. Then, 
\begin{equation*}
 \mathcal{I}=
\begin{cases}
 \restr{\mathcal{I}}{G-H} \cup (M-\{v\}) &\textup{ if } m-1>n,\\ 
\restr{\mathcal{I}}{G-H} \cup N &\textup{ if }  m-1<n,\\
 \restr{\mathcal{I}}{G-H} \cup (M-\{v\}) \textup{ or }  \restr{\mathcal{I}}{G-H} \cup N &\textup{ if } m-1=n,
\end{cases}
\end{equation*}
 and hence $\alpha(G)=|\mathcal{I}|= \left|\restr{\mathcal{I}}{G-H}\right|+\max \{m-1,n\}$. Suppose that there exists an $\alpha(G-H)$-set $\mathcal{J} (\subset V(G-H))$ such that $v\notin \mathcal{J}$. Then, if we consider $\mathcal{L} \subset V(G)$ such that
\begin{equation*}
\mathcal{L}=
\begin{cases}
 \mathcal{J} \cup (M-\{v\}) &\textup{ if } m-1>n,\\ 
\mathcal{J} \cup N &\textup{ if }  m-1<n,\\
\mathcal{J} \cup (M-\{v\}) \textup{ or } \mathcal{J} \cup N &\textup{ if } m-1=n,
\end{cases}
\end{equation*}
then $\mathcal{L}$ is  an independent set in $G$ and  $|\mathcal{L}|= |\mathcal{J}|+\max \{m-1,n\}$. Since  $\mathcal{J}$ is an $\alpha(G-H)$-set, so $|\mathcal{J}|> \left|\restr{\mathcal{I}}{G-H}\right|$ and hence  $|\mathcal{L}| >  \alpha(G)$,  which is a contradiction. This proves part $(ii).$ 

Furthermore, if $v\notin \mathcal{I}$ and $\restr{\mathcal{I}}{G-H}$ is not an $\alpha(G-H)$-set, then $\restr{\mathcal{I}}{G-H}$ is an $\alpha((G-H)-v)$-set, else there exists an independent set $\mathcal{J}\subset V((G-H)-v)$ such that $|\mathcal{J}|> \left|\restr{\mathcal{I}}{G-H}\right|$ and  hence proceeding similar to  part $(ii)$ leads to a contradiction. Next, in this case, if $\mathcal{L}$ is an $\alpha(G-H)$-set, then by part $(ii)$, we have $v\in \mathcal{L}$ and hence using Lemma~\ref{lem:2.1}, we get $\alpha(G-H)= |\mathcal{L}|= \left|\restr{\mathcal{I}}{G-H}\right| + 1.$ This completes the proof.
\end{proof}

The next result relates the independence number of a bi-block of a graph with its leaf blocks.

\begin{prop}\label{prop:alpha}
Let $G$ be a  bi-block graph and $H=K_{m,n}$  be any leaf block connected to the graph $G$ at a cut-vertex $v$ and $G-H$ be the graph obtained from $G$ by removing $H - v$. If $m \geq n$, then  $\alpha(G)$ equals to  either $ \alpha(G-H)+m$ or $\alpha(G-H)+m-1$.
\end{prop}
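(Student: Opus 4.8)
The plan is to sandwich $\alpha(G)$ between $\alpha(G-H)+m-1$ and $\alpha(G-H)+m$; since these are consecutive integers, the claim follows at once. Write $H=K(M,N)$ with $|M|=m\ge n=|N|$. The structural fact I will use repeatedly is that, $H$ being a leaf block, $v$ is the only vertex of $H$ that is a cut-vertex of $G$, so every vertex of $H$ other than $v$ has all its neighbours inside $H$; also $V(G)$ is the disjoint union of $V(G-H)$ and $V(H)\setminus\{v\}$.

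First I would establish the upper bound $\alpha(G)\le\alpha(G-H)+m$. Let $\mathcal{I}$ be an $\alpha(G)$-set and split it as $\mathcal{I}=\mathcal{I}'\sqcup\mathcal{I}''$ with $\mathcal{I}'=\restr{\mathcal{I}}{G-H}$ and $\mathcal{I}''=\mathcal{I}\cap(V(H)\setminus\{v\})$. Then $\mathcal{I}'$ is independent in $G-H$, so $|\mathcal{I}'|\le\alpha(G-H)$, while $\mathcal{I}''$ is independent in the graph obtained from $K_{m,n}$ by deleting the single vertex $v$, whose independence number is $\max\{m-1,n\}$ if $v\in M$ and $\max\{m,n-1\}$ if $v\in N$; since $m\ge n$, both quantities are at most $m$. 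Adding these gives $\alpha(G)=|\mathcal{I}|\le\alpha(G-H)+m$.

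Next I would establish the lower bound $\alpha(G)\ge\alpha(G-H)+m-1$ by an explicit construction. Fix an $\alpha(G-H)$-set $\mathcal{J}$ (this exists, as $v\in V(G-H)$). If $v\in M$, then $M\setminus\{v\}$ is disjoint from $V(G-H)$ and, by the structural fact, has no neighbour in $V(G-H)$, so $\mathcal{J}\cup(M\setminus\{v\})$ is independent in $G$ of size $\alpha(G-H)+m-1$. If $v\in N$, then $M$ is disjoint from $V(G-H)$ and its only possible neighbour in $V(G-H)$ is $v$; hence $\mathcal{J}\cup M$ is independent when $v\notin\mathcal{J}$ (size $\alpha(G-H)+m$), and $(\mathcal{J}\setminus\{v\})\cup M$ is independent when $v\in\mathcal{J}$ (size $\alpha(G-H)+m-1$). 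Either way $\alpha(G)\ge\alpha(G-H)+m-1$.

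Combining, $\alpha(G-H)+m-1\le\alpha(G)\le\alpha(G-H)+m$, which is the assertion. The only place that needs care is the bookkeeping around $v$: one must distinguish which side of $H$ contains $v$ and whether $v$ lies in the auxiliary set $\mathcal{J}$, precisely because $v$ --- unlike the other vertices of $H$ --- has neighbours outside $H$. One could instead derive the result by feeding the case analysis of Lemma~\ref{lem:ind_set}, in particular part~(iii), into the value of $|\mathcal{I}|$ obtained in its proof, but the two-sided estimate above is shorter and self-contained.
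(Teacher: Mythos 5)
Your proof is correct, and it takes a genuinely different route from the paper's. You sandwich $\alpha(G)$ between the two consecutive integers $\alpha(G-H)+m-1$ and $\alpha(G-H)+m$: the upper bound comes from splitting any $\alpha(G)$-set over the disjoint union $V(G)=V(G-H)\sqcup\bigl(V(H)\setminus\{v\}\bigr)$ and bounding the two pieces by $\alpha(G-H)$ and by $\alpha(H-v)\le m$ (using $m\ge n$), while the lower bound is an explicit construction that adjoins $M\setminus\{v\}$ (or $M$, deleting $v$ from $\mathcal{J}$ if necessary) to an $\alpha(G-H)$-set; the only bookkeeping is which side of $H$ contains $v$ and whether $v\in\mathcal{J}$, and your structural premise that $v$ is the unique cut-vertex of $G$ inside the leaf block $H$ is exactly the standing assumption of the paper. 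The paper instead proves the proposition by a case analysis on an $\alpha(G)$-set $\mathcal{I}$ --- whether $v\in\mathcal{I}$ and whether $\restr{\mathcal{I}}{G-H}$ is an $\alpha(G-H)$-set --- feeding each case through parts (i)--(iii) of Lemma~\ref{lem:ind_set}. Your argument is shorter and self-contained (it does not invoke Lemma~\ref{lem:ind_set} at all, nor the bi-block structure of $G-H$), which is all the stated proposition requires; the paper's longer route additionally identifies, in each structural case, which of the two values is attained and where $v$ must sit (e.g.\ $v\in M$ when $m>n$ and $v\in\mathcal{I}$), finer information in the spirit of the machinery it develops for the later lemmas, though that extra precision is not needed for the statement itself.
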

\begin{proof}
Let $H=K(M,N)$ where $|M|=m$, $|N|=n$  and $m\geq n$.  Let $\mathcal{I}$ be an $\alpha(G)$-set and $\restr{\mathcal{I}}{G-H}$ be as defined  before. We consider the following cases to complete the proof. First consider the case whenever $ v \notin  \mathcal{I} \text{ and } \restr{\mathcal{I}}{G-H} \mbox{ is an }\alpha(G-H)\mbox{-set}$. If $v\in M$ and $m>n$, then $\alpha(G) =  \left|\restr{\mathcal{I}}{G-H}\right| + m-1$. Otherwise  $\alpha(G) = \left|\restr{\mathcal{I}}{G-H}\right| + m$. Thus, by part ($iii$) of Lemma~\ref{lem:ind_set} the result is true.

Next, consider the case  $ v \notin  \mathcal{I} \text{ and } \restr{\mathcal{I}}{G-H}  \mbox{ is not an }\alpha(G-H)$-set. Let $\mathcal{J}$ be an $\alpha(G-H)$-set. Then $|\mathcal{J}|>\left|\restr{\mathcal{I}}{G-H}\right|$ and by part ($ii$) of Lemma~\ref{lem:ind_set}, we have $v\in \mathcal{J}$.    If $m>n$, then $v$   necessarily  belongs to $N$. Suppose on the contrary, let $m>n$ and $v\in M$. Then, $\mathcal{I}=\restr{\mathcal{I}}{G-H} \cup \left(M\setminus \{v\}\right)$ and hence $\alpha(G)=|\mathcal{I}|=\left|\restr{\mathcal{I}}{G-H}\right|+m-1.$ Note that,  $\mathcal{L} = \mathcal{J} \cup \left(M\setminus \{v\}\right)$ is an independent set of $G$ with $|\mathcal{L}| > \alpha(G)$, which leads to a contradiction.  Therefore,   $\alpha(G) = \left|\restr{\mathcal{I}}{G-H}\right| + m$ and by part ($iii$) of Lemma~\ref{lem:ind_set}, we have $\alpha(G) = \alpha(G-H)+m-1$.   If $m=n$, then 
$$\mathcal{I}=
\begin{cases}
\restr{\mathcal{I}}{G-H} \cup N & \mbox{ if } v \in M,\\
\restr{\mathcal{I}}{G-H} \cup M & \mbox{ if } v \in N.
\end{cases}$$
Thus, by part ($iii$) of Lemma~\ref{lem:ind_set}, we have $\alpha(G) = \alpha(G-H)+m-1$. 

Finally, let $v\in \mathcal{I}$. Then, by part ($i$) of Lemma~\ref{lem:ind_set} we know that $\restr{\mathcal{I}}{G-H} \mbox{ is an }\alpha(G-H)\mbox{-set}$. If $m>n$, then the cut-vertex $v$ necessarily belongs to $M$, else $\mathcal{L} = \left(\restr{\mathcal{I}}{G-H} \setminus \{v\}\right) \cup M$ is an independent set of $G$ with $|\mathcal{L}| > \alpha(G)$, which leads to a contradiction. Thus  $\alpha(G) = |\restr{\mathcal{I}}{G-H}| + m-1$ and by part ($iii$) of Lemma~\ref{lem:ind_set}, we get $\alpha(G) = \alpha(G-H)+m-1$. If $m=n$, then 
$$\mathcal{I}=
\begin{cases}
\restr{\mathcal{I}}{G-H} \cup \left(M\setminus \{v\}\right) & \mbox{ if } v \in M,\\
\restr{\mathcal{I}}{G-H} \cup \left(N\setminus \{v\}\right) & \mbox{ if } v \in N.
\end{cases}$$
Thus, by part ($iii$) of Lemma~\ref{lem:ind_set}, we have $\alpha(G) = \alpha(G-H)+m-1$. This completes the proof.
\end{proof}

Now we consider our main goal,  to maximize the spectral radius  for the class of bi-block graphs $ \mathcal{B}(\mathbf{k},\alpha)$. We begin with the result for bi-block graphs consisting of two-blocks. Before proceeding for the result,  we list a few identities as an observation below.

\begin{obs}\label{obs:1}
Let $G=(V,E)$ be a bi-block graph consisting of two blocks $F$ and $H$ connected by cut-vertex $v$, {\it i.e.,} $G= F\circledcirc H$. Let $F=K(P,Q)$ with $|P|=p$, $|Q|=q$ and $H=K(M,N)$ with $|M|=m$, $|N|=n$ such that $Q \cap M = \{v\}$. Let $A$ be the adjacency matrix of $G$ and $(\rho,X)$ be the eigen-pair corresponding to the spectral radius of $A$. Let $x_u$ denote the entry  of $X$ corresponding to the vertex $u\in V$.

Let  $q,m \geq  2$. Using $AX = \rho X$, we have $\rho x_u= \sum_{w\sim u}x_w= \sum_{w\in M}x_w$ for all $u \in N$. Thus $x_u$  is a constant, whenever 
$u \in N$ and we denote it by $a_n$. Using similar arguments, let us denote
\begin{equation}\label{eqn:1}
x_u=
\begin{cases} 
a_n & \mbox{ if } u \in N,\\
a_m & \mbox{ if } u \in M, u\neq v,\\
a_p & \mbox{ if } u \in P,\\
a_q & \mbox{ if } u \in Q, u\neq v.
\end{cases}
\end{equation}
Now using $AX = \rho X$, we have the following identities:
\begin{itemize}[noitemsep]
\item[\textup{(I$1$)}] $(q-1)a_q + x_v = \rho a_p$.
\item[\textup{(I$2$)}] $pa_p = \rho a_q$.
\item[\textup{(I$3$)}] $pa_p+na_n = \rho x_v$.
\item[\textup{(I$4$)}] $na_n = \rho a_m$.
\item[\textup{(I$5$)}] $x_v+(m-1)a_m = \rho a_n$.
\end{itemize}
Using the identities \textup{(I$2$)},\textup{(I$3$)} and \textup{(I$4$)}, we have $x_v = a_q+a_m$. Substituting   $x_v = a_q+a_m$ in   \textup{(I$1$)} and \textup{(I$5$)}, we have  
\begin{itemize}[noitemsep]
\item[\textup{(I$1^*$)}] $qa_q + a_m = \rho a_p$,
\item[\textup{(I$5^*$)}] $a_q + ma_m = \rho a_n$.
\end{itemize}
Without loss of generality if we assume that $a_p = 1$, then 
\begin{itemize}[noitemsep]
\item[\textup{(I$6$)}] $a_q = \dfrac{p}{\rho}$, $a_m = \dfrac{\rho^2 - pq}{\rho}$ and $a_n = \dfrac{\rho^2 - pq}{n}$. 
\end{itemize}
Similarly, if we assume that $a_n = 1$, then
\begin{itemize}[noitemsep]
\item[\textup{(I$7$)}] $a_m = \dfrac{n}{\rho}$, $a_q = \dfrac{\rho^2 - mn}{\rho}$ and $a_p = \dfrac{\rho^2 - mn}{p}$.
\end{itemize}
Moreover, since  the ratio $\dfrac{a_p}{a_n}$ is constant for the Perron vector $X$, so using \textup{(I$6$)} and \textup{(I$7$)}, we have
\begin{itemize}
\item[\textup{(I$8$)}] $pn = (\rho^2 - pq)(\rho^2 - mn)$.
\end{itemize} 

If $m = 1$ and $q > 1$, then by choosing $ a_m = x_v-a_q$, all  the above identities are true. Similarly, for $q = 1$ and $m > 1$, we  choose $ a_q= x_v-a_m$.
\end{obs}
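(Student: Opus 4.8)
The plan is to read everything off the single eigenvalue equation $AX=\rho X$, using only the Perron--Frobenius fact that $\rho$ is simple and $X>0$, so $X$ is determined up to a positive scalar. The one structural input is the following: since $G=F\circledcirc H$ has the single cut-vertex $v$, every vertex $u\ne v$ lies in exactly one of the two blocks, so every edge at $u$ lies in that block and hence the neighbourhood of $u$ is exactly the part opposite to $u$ in that block --- a vertex of $N$ is adjacent to all of $M$, a vertex of $M\setminus\{v\}$ to all of $N$, a vertex of $P$ to all of $Q$, a vertex of $Q\setminus\{v\}$ to all of $P$ --- while $v$, sitting in the $Q$-part of $F$ and the $M$-part of $H$, is adjacent exactly to $P\cup N$. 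Since all vertices of $N$ share the \emph{same} neighbourhood, the coordinate of $AX=\rho X$ at $u\in N$ reads $\rho x_u=\sum_{w\in M}x_w$, a value not depending on $u$; thus $X$ is constant on $N$, and the same reasoning on the other three parts produces the constants $a_n,a_m,a_p,a_q$ of \eqref{eqn:1}. Re-expressing each neighbourhood sum through these constants then yields \textup{(I$1$)}--\textup{(I$5$)} immediately: \textup{(I$1$)} from a vertex of $P$, \textup{(I$2$)} from one of $Q\setminus\{v\}$, \textup{(I$3$)} from $v$, \textup{(I$4$)} from one of $M\setminus\{v\}$, \textup{(I$5$)} from one of $N$.

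The rest is bookkeeping algebra. From \textup{(I$2$)} and \textup{(I$4$)} one gets $a_q=pa_p/\rho$ and $a_m=na_n/\rho$, so $a_q+a_m=(pa_p+na_n)/\rho=x_v$ by \textup{(I$3$)}; substituting $x_v=a_q+a_m$ into \textup{(I$1$)} and \textup{(I$5$)} gives \textup{(I$1^*$)} and \textup{(I$5^*$)}. Normalising $a_p=1$ (legitimate as $X>0$ is unique up to scaling), \textup{(I$2$)} yields $a_q=p/\rho$, then \textup{(I$1^*$)} yields $a_m=(\rho^2-pq)/\rho$, then \textup{(I$4$)} yields $a_n=\rho a_m/n=(\rho^2-pq)/n$ --- this is \textup{(I$6$)}; the symmetric normalisation $a_n=1$, pushed through \textup{(I$4$)}, \textup{(I$5^*$)} and \textup{(I$2$)}, gives \textup{(I$7$)}. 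Positivity of $a_m,a_n$ forces $\rho^2-pq>0$ and likewise $\rho^2-mn>0$, so no division is illegitimate. Lastly, since the Perron vector is unique up to scaling the ratio $a_p/a_n$ is an invariant of $G$; \textup{(I$6$)} evaluates it to $n/(\rho^2-pq)$ and \textup{(I$7$)} to $(\rho^2-mn)/p$, and equating these and clearing denominators gives \textup{(I$8$)}: $pn=(\rho^2-pq)(\rho^2-mn)$.

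Finally the degenerate cases, where one ``constant'' has no coordinate carrying it. If $m=1$, $q>1$, then $M=\{v\}$, so one \emph{defines} $a_m:=x_v-a_q$ and checks that every identity survives: \textup{(I$1$)} and \textup{(I$3$)} are read off exactly as before (with $m=1$ the neighbours of $v$ inside $H$ are still all of $N$), \textup{(I$2$)} holds because $q>1$ keeps $Q\setminus\{v\}$ nonempty, and then $\rho a_m=\rho x_v-\rho a_q=(pa_p+na_n)-pa_p=na_n$ is precisely \textup{(I$4$)}, while \textup{(I$5$)} collapses to $x_v=\rho a_n$, which is the coordinate equation at a vertex of $N$ whose only neighbour is $v$; the identities \textup{(I$1^*$)}--\textup{(I$8$)} then follow formally as above, $x_v=a_q+a_m$ holding by construction. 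The case $q=1$, $m>1$ is the mirror image, obtained by interchanging $F$ and $H$ (set $a_q:=x_v-a_m$). I do not expect a genuine obstacle here: the statement is simply a catalogue of consequences of $AX=\rho X$ under a very rigid neighbourhood pattern, and the only spots needing care are (a) deducing ``$X$ is constant on each part'' from the single-cut-vertex structure rather than assuming it, and (b) the bookkeeping in the $m=1$ and $q=1$ cases so that each displayed identity is literally, not merely formally, true after the ad hoc definition of $a_m$ (resp. $a_q$).
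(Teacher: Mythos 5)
Your proposal is correct and follows essentially the same route as the paper: the observation is itself just the inline derivation of (I$1$)--(I$8$) from the coordinate equations of $AX=\rho X$ on the four constant parts, the two normalisations, and the invariance of the ratio $a_p/a_n$. Your added remarks --- that constancy on each part follows from shared neighbourhoods, that positivity of the Perron vector forces $\rho^2>pq$ and $\rho^2>mn$ so no division is illegitimate, and the explicit verification that the ad hoc definitions of $a_m$ (when $m=1$) and $a_q$ (when $q=1$) make every identity literally true --- are consistent with, and slightly more careful than, the paper's presentation.
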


\begin{rem}\label{rem:2.5}
Under the assumption of  Observation~\ref{obs:1}, using  identity \textup{(I8)}   the spectral radius $\rho$ of adjacency matrix $A$ is given by
$$
\rho =  \sqrt{\frac{(pq+mn)+\sqrt{(pq-mn)^2 + 4pn}}{2}},
$$
and hence $\rho > \max \{ \sqrt{ pq} , \sqrt{ mn}\}.$  Observe that, since the spectral radius of  $K_{m,n}$ and  $K_{p,q}$ are  $\sqrt{mn}$ and $\sqrt{pq}$, respectively, so  $\rho > \max \{ \sqrt{ pq} , \sqrt{ mn}\}$ also follows from Lemma~\ref{lem:sr_edge}.
\end{rem}

The next lemma gives us a result on maximal spectral radius among bi-block graphs having two blocks with a fixed number of vertices and independence number.

\begin{lem}\label{lem:two_blocks}
Let $G \in \mathcal{B}(\mathbf{k}, \alpha)$. If $G$  consists of two blocks, then $\rho(G) < \rho(K_{\alpha,\mathbf{k}-\alpha})$. 
\end{lem}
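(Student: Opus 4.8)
The plan is to recognise $G$ as a \emph{proper spanning subgraph} of $K_{\alpha,\mathbf{k}-\alpha}$ and then invoke Lemma~\ref{lem:sr_edge}. Write $G=F\circledcirc H$ as in Observation~\ref{obs:1}: $F=K(P,Q)$, $H=K(M,N)$, with $|P|=p$, $|Q|=q$, $|M|=m$, $|N|=n$, and cut-vertex $v$ satisfying $Q\cap M=\{v\}$; thus $\mathbf{k}=p+q+m+n-1$. Every edge of $G$ lies in $F$ (so joins $P$ and $Q$) or in $H$ (so joins $M$ and $N$), so $G$ is bipartite with the (unique, since $G$ is connected) bipartition $X=Q\cup M$, $Y=P\cup N$, of sizes $|X|=q+m-1$ and $|Y|=p+n$. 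Since $\alpha(G)=\max\{|X|,|Y|\}$ (a bipartite graph has independence number equal to the larger side) and $|X|+|Y|=\mathbf{k}$, we get $\{|X|,|Y|\}=\{\alpha,\mathbf{k}-\alpha\}$; hence the complete bipartite graph on the parts $X,Y$ is exactly $K_{\alpha,\mathbf{k}-\alpha}$ on the vertex set of $G$, and $G$ is a spanning subgraph of it.

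Next I would check the inclusion is strict. Counting edges, $|E(G)|=pq+mn$, whereas $|E(K_{\alpha,\mathbf{k}-\alpha})|=(q+m-1)(p+n)=pq+mn+(m-1)p+(q-1)n$. As $p,n\ge 1$ and $m,q\ge 1$, the surplus $(m-1)p+(q-1)n$ is nonnegative and vanishes only when $m=q=1$; but then $Q=M=\{v\}$ and $G=K_{1,p+n}$ is a star, which the paper does not regard as a (two-block) bi-block graph, so this case is excluded. Hence $|E(G)|<|E(K_{\alpha,\mathbf{k}-\alpha})|$. Finally, listing the edges of $K_{\alpha,\mathbf{k}-\alpha}$ absent from $G$ and inserting them one at a time — each intermediate graph stays connected, since adding edges preserves connectivity — Lemma~\ref{lem:sr_edge} yields a strictly increasing chain $\rho(G)<\cdots<\rho(K_{\alpha,\mathbf{k}-\alpha})$, which is the claim.

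There is essentially no hard step here; the only care needed is the bookkeeping that identifies the canonical bipartition of $G$ with the two parts of $K_{\alpha,\mathbf{k}-\alpha}$, and the exclusion of the degenerate star $m=q=1$. Alternatively, one can avoid the subgraph argument and compare squares directly via the closed form in Remark~\ref{rem:2.5}: by identity~(I8), $\rho(G)^2$ is the unique root exceeding $\max\{pq,mn\}$ of $f(x)=(x-pq)(x-mn)-pn$, while $\alpha(\mathbf{k}-\alpha)=(q+m-1)(p+n)>\max\{pq,mn\}$, and a short computation gives $f\big((q+m-1)(p+n)\big)=\big(qn+(m-1)(p+n)\big)\big(p(q+m-1)+(q-1)n\big)-pn>0$, whence $\rho(G)^2<\alpha(\mathbf{k}-\alpha)=\rho(K_{\alpha,\mathbf{k}-\alpha})^2$; but the subgraph route is the cleaner one.
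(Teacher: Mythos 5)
There is a genuine gap, and it sits exactly where the lemma is hard. Your argument hinges on the claim that $\alpha(G)=\max\{|X|,|Y|\}$ for the canonical bipartition $X=Q\cup M$, $Y=P\cup N$. That identity holds for complete bipartite graphs but fails for two-block bi-block graphs in general, because a maximum independent set may take \emph{opposite} sides of the two blocks: $P\cup(M\setminus\{v\})$ and $(Q\setminus\{v\})\cup N$ are also independent, and either can be strictly larger than both $|X|$ and $|Y|$. Concretely, take $F=K(P,Q)$ and $H=K(M,N)$ with $p=m=5$, $q=n=2$, glued at $v\in Q\cap M$, so $\mathbf{k}=13$, $|X|=6$, $|Y|=7$; but $P\cup(M\setminus\{v\})$ is an independent set of size $9$, so $\alpha(G)=9$ and the target graph is $K_{9,4}$, not $K_{6,7}$. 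Here $G$ is \emph{not} a spanning subgraph of $K_{\alpha,\mathbf{k}-\alpha}$ under any identification of vertices respecting your bipartition, and since $\rho(K_{6,7})=\sqrt{42}>6=\rho(K_{9,4})$, establishing $\rho(G)<\rho(K_{6,7})$ does not yield the claim. (You were arguably misled by the paper's own throwaway remark that a bipartite graph has independence number equal to its larger side; that remark is false in this generality, though the paper never uses it in the proof of this lemma.) The same error propagates into your ``alternative'' computation, where you substitute $(q+m-1)(p+n)$ for $\alpha(\mathbf{k}-\alpha)$.

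Your subgraph argument does go through precisely when some $\alpha(G)$-set is one full side of the canonical bipartition — this covers the paper's Cases 1, 2, Subcase 3.1, and the subcase $m=n+1$ of its Case 4, where the paper likewise just adds edges and invokes Lemma~\ref{lem:sr_edge}. The remaining configurations ($q>p+1$, $n>m$, with $\alpha=q+n-1$, and symmetrically $p>q$, $m>n+1$, with $\alpha=p+m-1$) are the substance of the lemma: there the paper builds $K_{\alpha,\mathbf{k}-\alpha}$ by \emph{deleting} the edges from $v$ to $P$ as well as adding edges, and must show the Rayleigh quotient of the Perron vector of $G$ does not decrease, using the identities of Observation~\ref{obs:1} and the degree bound $\rho\le\max\{p+n,q\}$. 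Some such non-monotone comparison (or a direct estimate of the root of $(x-pq)(x-mn)=pn$ against the \emph{correct} value $\alpha(\mathbf{k}-\alpha)$) is unavoidable; your proof as written does not supply it.
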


\begin{proof}
Let $G$  be a bi-block graph consists of two blocks $F$ and $H$ connected by  the cut-vertex $v$. Let $F=K(P,Q)$, where $|P|=p$, $|Q|=q$ and $H=K(M,N)$, where $|M|=m$, $|N|=n$ such that 
$Q \cap M = \{v\}$. Then $\mathbf{k}=p+q+m+n-1.$

 If $m = 1$ and $q = 1$, then $\mathbf{k}=p+n+1$  and $G=K_{1, p+n}$ with independence number  $\alpha(G)= p+n$. Thus,   for $\alpha= p+n$ the class $\mathcal{B}(\mathbf{k},\alpha)$   consists of only the star  $G=K_{1, p+n}$ and hence  result is vacuously true. We complete the proof by  considering the following cases.

\underline{\textbf{Case 1:}}  If $p \geq q$ and $n \geq m$, then $\mathcal{I}= P \cup N $ is the $\alpha(G)$-set. We consider the complete bipartite graph $G^*=K(\widetilde{P}, \widetilde{Q})$, where $\widetilde{P}= P \cup N$ and $\widetilde{Q} = Q \cup M$. Thus  $\alpha(G) = \alpha(G^*) = p+n$. Since $G^*$ is obtained from $G$ by adding extra edges, so  by Lemma~\ref{lem:sr_edge}, we have $\rho(G) < \rho(G^*)$.

\underline{\textbf{Case 2:}} If $q>p$ and $m \geq n$, then  $\mathcal{I}= Q \cup M$ is an $\alpha(G)$-set. We consider the complete bipartite graph $G^*=K(\widetilde{P}, \widetilde{Q})$, where $\widetilde{P}= P \cup N$ and $\widetilde{Q} = Q \cup M$. Thus  $\alpha(G) = \alpha(G^*) =  q+m-1$. Since $G^*$ is obtained from $G$ by adding extra edges, so  by Lemma~\ref{lem:sr_edge}, we have $\rho(G) < \rho(G^*)$.

\underline{\textbf{Case 3: }} If $q>p$ and $n>m$, then   $\mathcal{I}=(Q \setminus \{v\}) \cup N$ is an $\alpha(G)$-set and hence $\alpha(G)=q+n-1$. Now we subdivide this case as follows: 

\underline{\textbf{Subcase 3.1:}} Let $p=q-1$. Then $\mathcal{L}=P \cup N$ is an independent set in $G$ and $|\mathcal{L}|=q+n-1$. This implies that $\mathcal{L}$ is also an $\alpha(G)$-set. We consider the complete bipartite graph $G^*=K(\widetilde{P}, \widetilde{Q})$, where $\widetilde{P}= P \cup N$ and $\widetilde{Q} = Q \cup M$. Thus,  $\alpha(G) = \alpha(G^*) =  q+n-1$. Since $G^*$ is obtained from $G$ by adding extra edges, so  by Lemma~\ref{lem:sr_edge}, we have $\rho(G) < \rho(G^*)$.

\underline{\textbf{Subcase 3.2:}} Let $p< q-1$. In view of the  $\alpha(G)$-set $\mathcal{I}=(Q \setminus \{v\}) \cup N$, we 
consider the complete bipartite graph $G^*=K(\widetilde{P}, \widetilde{Q})$, where $\widetilde{P}= P \cup M$ and $\widetilde{Q} = (Q \setminus \{v\}) \cup N$. So $\alpha(G) = \alpha(G^*) = q+n-1$. Observe that, we can obtain the graph $G^*$ from $G$ using the following operations:
\begin{itemize}[noitemsep]
\item[1.] Delete the edges between vertex $v$ and the vertices of $P$.
\item[2.] Add edges between vertices of $M$ and $Q \setminus \{v\}$.
\item[3.] Add edges between vertices of $P$ and $N$.
\end{itemize}
Let $A$ be the adjacency matrix of $G$ and $(\rho,X)$ be the eigen-pair corresponding to the spectral radius of $A$.  Let $A^*$ be the adjacency matrix of $G^*$. Using the notations and identities  in Observation~\ref{obs:1}, we have
\begin{align*}
&\frac{1}{2} X^t(A^*-A)X
=- x_v \sum_{w \in P} x_w + \sum_{\substack{u \sim w \\ u \in M, w \in Q \setminus \{v\}}}x_u x_w + \sum_{\substack{u \sim w \\ u \in P, w \in N }}x_u x_w\\
&= -pa_p(a_q+a_m)+(q-1)a_q(a_q+m a_m)+pna_pa_n & [\mbox{By Eq.}\eqref{eqn:1}] \\
&= -pa_p(a_q+a_m)+(q-1)\rho a_q a_n+pna_pa_n & \text{ [Using (I5*)]}\\
&= -pa_p(a_q+a_m)+(q-1)p a_p a_n+pna_pa_n & \text{[ Using (I2)]}\\
					  &= p\left[(q-1)a_n + \rho a_m - (a_q+a_m)\right] &\text{[Using $a_p = 1$]}\\
					  &= \frac{p}{\rho n} \left[\rho (q-1) (\rho^2 -pq) + \rho n(\rho^2 -pq) - n(p+\rho^2 -pq )\right] & \text{[ Using  (I6)]}\\
					  &= \frac{p}{\rho n} \left[\rho (q-1) (\rho^2 -pq) + \rho n(\rho^2 -pq) - n(\rho^2 -pq) -(\rho^2 - pq)(\rho^2 - mn)\right] & \text{[ Using  (I8)]}\\
					  &= \frac{p(\rho^2 -pq)}{\rho n} \left[\rho (q-1)  + \rho n - n -(\rho^2 - mn)\right]\\
					  &= \frac{p(\rho^2 -pq)}{\rho n} \left[\rho (q+n-1)  -\rho^2 +n(m-1)\right]. 
\end{align*}
By  Lemma~\ref{lem:sr_deg}, we have $\rho \leq \max\{p+n,q\}$. And  using  the assumption $p < q-1$, we always have $q+n-1 > \rho$. Since $X$ is the Perron vector of $G$, so $X^t(A^*-A)X > 0$. Hence by Min-max theorem, we have  $\rho(G) < \rho(G^*)$.

\underline{\textbf{Case  4:}} If $p>q$ and $m>n$, then  $\mathcal{I}= P \cup (M \setminus \{v\})$ is an $\alpha(G)$-set and $\alpha(G)=p+m-1.$ This case is analogous to Case 3 and hence  proceeding similarly, we have $\rho(G) < \rho(G^*)$.
\end{proof}

In the above lemma we have considered  a bi-block graph $G$ with two blocks  and hence the  cut-vertex $v$ have   the block index $bi_{G}(v)=2$. In the  next lemma, we will consider bi-block graphs such that the block index of each of the cut-vertex is exactly $2.$ 
\begin{lem}\label{lem:multi_blocks}
Let $G \in \mathcal{B}(\mathbf{k},\alpha)$. If $bi_{G}(u)=2$ for all cut-vertex $u$ in $G$, then $\rho(G) \leq \rho(K_{\alpha, \mathbf{k} - \alpha})$ and equality holds if and only if $G = K_{\alpha, \mathbf{k}- \alpha}$. 
\end{lem}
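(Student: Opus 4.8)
The plan is to induct on the number of blocks of $G$. Lemma~\ref{lem:two_blocks} gives the base case (two blocks). So suppose $G \in \mathcal{B}(\mathbf{k},\alpha)$ has $t \geq 3$ blocks, every cut-vertex has block index $2$, and $G$ is not itself complete bipartite. Since block indices are all $2$, the block graph of $G$ is a tree in which every vertex (block) meeting a cut-vertex does so at a degree-$2$ vertex; in particular there is a leaf block $H = K_{m,n}$ attached at a cut-vertex $v$ to the rest $G' = G - H$, and because $bi_G(v) = 2$, $v$ lies in exactly one block of $G'$, which is also a leaf block of $G'$ unless $G'$ has only one block. Write $F$ for the block of $G'$ containing $v$, so that $F \circledcirc H$ is an induced subgraph of $G$.

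The idea is a local surgery confined to $F \circledcirc H$ that increases the spectral radius while preserving $\mathbf{k}$ and $\alpha$, after which the inductive hypothesis applied to the modified graph (which has one fewer block, or becomes complete bipartite) finishes the argument. Concretely: first apply Proposition~\ref{prop:alpha} to pin down $\alpha(G)$ in terms of $\alpha(G')$ and the sizes $m,n$ of $H$ (the two possibilities $\alpha(G') + m$ and $\alpha(G') + m - 1$, assuming WLOG $m \geq n$). Then merge the two blocks $F$ and $H$ into a single complete bipartite block on the same vertex set $V(F) \cup V(H)$, choosing the bipartition so that the independence number is unchanged — this is exactly the bookkeeping done case-by-case in the proof of Lemma~\ref{lem:two_blocks} with $F \circledcirc H$ playing the role of the two-block graph there, and the side of the new bipartition containing $v$ chosen to match how $v$ sits relative to the rest of $G'$. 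The resulting graph $\widehat{G}$ has $t-1$ blocks, lies in $\mathcal{B}(\mathbf{k},\alpha)$, still has all cut-vertex block indices equal to $2$, and by the computation in Lemma~\ref{lem:two_blocks} (the Min-max / Perron-vector argument, now applied with the Perron vector of $G$ restricted appropriately and extended by the constant values $a_p, a_q, a_m, a_n$ on the relevant vertex classes) satisfies $\rho(G) < \rho(\widehat{G})$. By induction $\rho(\widehat{G}) \leq \rho(K_{\alpha,\mathbf{k}-\alpha})$, hence $\rho(G) < \rho(K_{\alpha,\mathbf{k}-\alpha})$, and since the only graph in the class with $t=1$ block is $K_{\alpha,\mathbf{k}-\alpha}$ itself, this gives both the inequality and the uniqueness of the extremal graph.

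The main obstacle I anticipate is the $\alpha$-preserving choice of bipartition when merging $F$ and $H$, together with verifying that the surgery does not violate the block-index-$2$ hypothesis needed to apply the inductive step. The delicate point is that $v$ is a cut-vertex of $G$ shared by $F$ and the remainder $G' - F$ (if $t \geq 3$), so when we dissolve $F$ into the merged block we must be sure the new block still meets $G' - F$ only at $v$ and that $v$ retains block index $2$; this forces the merge to keep $v$ on a prescribed side. One must also handle the degenerate sub-cases $m = 1$ or $n = 1$ (edges as blocks, stars), exactly as Observation~\ref{obs:1} flags them, and the boundary situation where after the merge $\widehat{G}$ has only one block. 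A clean way to organize this is to split on which of the four cases of Lemma~\ref{lem:two_blocks} the pair $(F,H)$ falls into, and in each case exhibit the explicit bipartition of $V(F)\cup V(H)$ and the explicit independent set realizing $\alpha$, so that $\alpha(\widehat{G}) = \alpha(G)$ is immediate from $\alpha(G) = \alpha(G') + (\text{contribution of } H\circledcirc F \text{ beyond } G'-F)$; the spectral inequality then transfers verbatim from Lemma~\ref{lem:two_blocks} because that proof only used the local structure $F \circledcirc H$ and Lemmas~\ref{lem:sr_deg}–\ref{lem:sr_edge}.
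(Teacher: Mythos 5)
Your overall skeleton --- induction on the number of blocks with base case Lemma~\ref{lem:two_blocks}, followed by a local surgery on a leaf block $H$ and its unique neighbour $F$ that preserves $\mathbf{k}$ and $\alpha$ and does not decrease $\rho$ --- is exactly the paper's strategy. But the central claim on which your inductive step rests, namely that ``the spectral inequality transfers verbatim from Lemma~\ref{lem:two_blocks} because that proof only used the local structure $F\circledcirc H$,'' is false, and this is precisely where the real work of the lemma lies.

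The computation in Lemma~\ref{lem:two_blocks} (Subcase~3.2) depends on the identities (I$1$)--(I$8$) of Observation~\ref{obs:1}, which in turn depend on the Perron vector being \emph{constant} on each of the classes $P$, $Q\setminus\{v\}$, $M\setminus\{v\}$, $N$. That constancy holds only because $F\circledcirc H$ is the entire graph there. Inside a larger $G$, the vertices of $P$ and $Q\setminus\{v\}$ may themselves be cut-vertices attached to other blocks, so the Perron vector of $G$ is constant only on $M\setminus\{v\}$ and $N$ (the leaf block minus its cut-vertex); you cannot ``extend by the constant values $a_p,a_q$'' as you propose. The paper copes with this by deriving the weaker identities (J$1$)--(J$4$), which involve only $x_v$, $b_m$, $b_n$, $\sum_{w\in P}x_w$ and the value $x_c$ at a \emph{non-cut} vertex $c\in Q$; when no such $c$ exists (all of $Q$ consists of cut-vertices) it must abandon the $F$--$H$ merge entirely and instead merge $F$ with a different neighbouring block. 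Moreover the correct case division is governed by how the $\alpha(G)$-set $\mathcal{I}$ meets $P$ and $Q$, not by the size comparisons of Lemma~\ref{lem:two_blocks}: e.g.\ when $\mathcal{I}\cap P\neq\emptyset$ and $m>n+1$ no $\alpha$-preserving merge of $F$ with $H$ into a single complete bipartite block is available at all, and the paper has to chase a chain of blocks $C_1,\dots,C_t$ to find a different leaf block at which to operate. Finally, even in the tractable subcase the paper needs a surgery with no analogue in Lemma~\ref{lem:two_blocks} (splitting $N$ into $N_1\cup N_2$ and forming $K(P\cup N_1,\,Q\cup M\cup N_2)$, Subcase~2.2). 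So your proposal correctly identifies the frame but omits the several distinct mechanisms needed to make the inductive step go through; as written, the key step does not follow.
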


\begin{proof}
We will use induction on the number of blocks to prove the lemma.  Let $G \in \mathcal{B}(\mathbf{k},\alpha)$ be  a bi-block graph that consists of $b$  blocks and $bi_{G}(c)=2$ for every cut-vertex $c$ in $G$. By Lemma~\ref{lem:two_blocks}, the result is true for $b=2.$ We assume that the result is true for all bi-block graphs in $\mathcal{B}(\mathbf{k},\alpha)$ consisting of $b-1$  blocks.  Let $H=K(M,N)$ with $|M|=m$ and $|N|=n$ be a leaf block connected to the graph $G$ at a cut-vertex $v$. Since  $bi_{G}(v)=2$, so there exists a unique block $F=K(P,Q)$ with $|P|=p$ and $|Q|=q$  which is a neighbour of $H$ connected via the cut-vertex $v$. Without loss of generality, we assume that $M \cap Q = \{v\}$. Let $\mathcal{I}$ be an $\alpha(G)$-set of $G$, {\it{i.e.}}, $|\mathcal{I}| = \alpha$.

\underline{\textbf{Case 1:}} $\mathcal{I} \cap P =  \emptyset$  and $\mathcal{I} \cap Q =  \emptyset$. In this case, either   $M \setminus\{v\} \subset \mathcal{I}$  or $N \subset \mathcal{I}$.   We consider the complete bipartite graph $K(\widetilde{P}, \widetilde{Q})$, where $\widetilde{P} = P \cup N$ and $\widetilde{Q} = Q \cup M$. Let $G^*$ be the graph obtained from $G$ by replacing the induced subgraph $F\circledcirc H$  with  $K(\widetilde{P}, \widetilde{Q})$. Then, the resulting graph $G^*$ consists of $b-1$ blocks and $\mathcal{I}$  is an $\alpha(G^*)$-set, {\it i.e.,} $G^* \in \mathcal{B}(\mathbf{k},\alpha)$. Since $G^*$ is obtained from $G$ by adding additional edges, so by Lemma~\ref{lem:sr_edge}, we have $\rho(G) < \rho(G^*)$. Thus, the induction hypothesis yields the result.

\underline{\textbf{Case 2:}} $\mathcal{I} \cap P =  \emptyset$  and $\mathcal{I} \cap Q \neq \emptyset$. For   $m \geq n$,  we can assume  $ M \subset\mathcal{I}$.  We consider graph  $G^*$ which is obtained from $G$ by replacing the induced subgraph $F\circledcirc H$  with  $K(\widetilde{P}, \widetilde{Q})$, where $\widetilde{P} = P \cup N$ and $\widetilde{Q} = Q \cup M$,  which implies that  $\mathcal{I}$  is an $\alpha(G^*)$-set. Thus arguing similar to the Case 1  yields the result.

\underline{\textbf{Case 3:}}  $\mathcal{I} \cap P =  \emptyset$  and $\mathcal{I} \cap Q \neq \emptyset$. For $n > m$, if  $v \in \mathcal{I}$, then $\mathcal{L}=(\restr{\mathcal{I}}{G-H}\setminus \{v\}) \cup N$ is an independent set of $G$ and $|\mathcal{L}|> |\mathcal{I}|$, which  leads to a contradiction. Thus $v \notin \mathcal{I}$ and we have the following:
\begin{equation}\label{eqn:2}
\begin{cases}
 v \notin \mathcal{I} \mbox { and } \mathcal{I}= \restr{\mathcal{I}}{G-H} \cup N, \\
 \alpha(G)= \left|\restr{\mathcal{I}}{G-H}\right| +n.
 \end{cases}
\end{equation}
Next, we subdivide the case  $\mathcal{I} \cap P =  \emptyset$ and $\mathcal{I} \cap Q \neq \emptyset$ with $n>m$, into the following subcases.

\underline{\textbf{Subcase 1:}}  Suppose that  all the vertices of $Q$ are cut-vertices. Let $u \in Q \setminus \{v\}$ be a cut-vertex and $u \in \mathcal{I}$.  Since $bi_{G}(u)=2$, so  let $B=K(R,S)$ be the   neighbour of the block $F$ via the cut-vertex $u$, where $ R\cap Q= \{u\}$. Thus,  $u \in \mathcal{I}$  and $u\in R$  implies  that $\mathcal{I} \cap S = \emptyset$.  Consider the bi-block graph $G^*$ obtained from $G$ by replacing the induced subgraph $F\circledcirc B$  with the complete bipartite graph $K(\widetilde{P}, \widetilde{Q})$, where $\widetilde{P} = P \cup S$ and $\widetilde{Q} = Q \cup R$. It is easy to see that  $\mathcal{I}$  is an $\alpha(G^*)$-set and  $G^*\in \mathcal{B}(\mathbf{k}, \alpha)$  consists of $b-1$ blocks. Hence the result follows from the Lemma~\ref{lem:sr_edge} and the induction hypothesis.

\underline{\textbf{Subcase 2:}}   Let $c\in Q$   and $c$ is not a cut-vertex. Since $\mathcal{I} \cap P =\emptyset$, so  $c \in \mathcal{I}$. Let $A$ be the adjacency matrix of $G$ and $(\rho,X)$ be the eigen-pair corresponding to the spectral radius of $A$.  Let $x_u$ denote the entry  of $X$ corresponding to  the vertex $u\in V$. Using $AX=\rho X$ and arguing  similar to the Observation~\ref{obs:1},  we  find a few identities as follows. For $m\geq 2$,  let us denote
\begin{equation}\label{eqn:3}
x_u=
\begin{cases}
b_n & \mbox{ if } u\in N,\\
b_m  & \mbox{ if } u\in M, u\neq v.
\end{cases}
\end{equation}

Using $c\in Q$, $c$ is not a cut-vertex  and $AX=\rho X$, we have the following identities:
\begin{itemize}[noitemsep]
	\item[\textup{(J$1$)}] $\rho x_c = \sum_{w \in P} x_w$.
	\item[\textup{(J$2$)}] $\rho x_v = \sum_{w \in P} x_w + n b_n$.
	\item[\textup{(J$3$)}] $\rho b_n = (m-1)b_m + x_v$.
	\item[\textup{(J$4$)}] $\rho b_m =  n b_n$.
\end{itemize}
Using indentities \textup{(J$1$)}, \textup{(J$2$)} and \textup{(J$4$)}, we have $x_v = x_c + b_m$. Thus the identity \textup{(J$3$)}  reduces to:
	\begin{itemize}[noitemsep]
	 	\item[\textup{(J$3^*$)}] $\rho b_n = mb_m + x_c$.
	\end{itemize}
 Next, if $m=1$, then by choosing $b_m = x_v - x_c$, all the above identities  are true. Now  we further subdivide the Subcase 2 as follows: 
 
\underline{\textbf{Subcase 2.1:}} Whenever $b_m \geq  b_n$.\\ 
Let  $G^*$ be a bi-block graph obtained from $G$ by replacing  the induced subgraph $F\circledcirc H$  with the complete bipartite graph $K(\widetilde{P}, \widetilde{Q})$, where $\widetilde{P} =  P \cup M$ and $\widetilde{Q} = (Q \setminus \{v\}) \cup N$. Thus, $\mathcal{I}$ is an $\alpha(G^*)$-set and  $G^*\in \mathcal{B}(\mathbf{k}, \alpha)$  consists of $b-1$ blocks. Note that,  we can obtain the graph $G^*$ from $G$ using the following operations:
\begin{itemize}[noitemsep]
\item[1.] Delete the edges between vertex $v$ and the vertices of $P$.
\item[2.] Add edges between vertices of $M$ and $Q \setminus \{v\}$.
\item[3.] Add edges between vertices of $P$ and $N$.
\end{itemize}
Let $A^*$ be the adjacency matrix of $G^*$. Using  the above identities,  we have
\begin{align*}
&\frac{1}{2} X^t(A^*-A)X 
= -x_v  \sum_{w \in P} x_w + \sum_{\substack{u \sim w \\ u \in M, w \in Q \setminus \{v\}}}x_u x_w + \sum_{\substack{u \sim w \\ u \in P, w \in N }}x_u x_w\\
&= -(x_c + b_m)  \sum_{w \in P} x_w + (mb_m + x_c) \sum_{w \in Q \setminus \{v\}} x_w +  n b_n \sum_{w \in P} x_w & [\mbox{By Eq.}\eqref{eqn:3}]\\
&= -(x_c + b_m)  \rho x_c + (mb_m + x_c) \sum_{w \in Q \setminus \{v\}} x_w +  n b_n  \rho  x_c & \text{[Using \textup{(J$1$)}]}\\
 &= -(x_c + b_m)  \rho x_c + (mb_m + x_c) \sum_{w \in Q \setminus \{v\}} x_w +  \rho^2 b_m  x_c & \text{[Using \textup{(J$4$)}]}\\
					  &\geq -(x_c + mb_m)  \rho x_c + (mb_m + x_c) \sum_{w \in Q \setminus \{v\}} x_w +  \rho^2 b_m x_c\\
					  &= -\rho^2 b_nx_c + (mb_m + x_c) \sum_{w \in Q \setminus \{v\}} x_w +  \rho^2 b_m  x_c & \text{[Using \textup{(J$3^*$)}]}\\
					   &= \rho^2(b_m -  b_n)x_c + (mb_m +  x_c) \sum_{w \in Q \setminus \{v\}} x_w.
\end{align*}
 Since $b_m \geq  b_n$,  and $X$ is the Perron vector of $G$, so $X^t(A^*-A)X \geq 0$. Thus, by Min-max theorem, we have  $\rho(G) \leq \rho(G^*)$ and hence the induction hypothesis yields the result.
	
\underline{\textbf{Subcase 2.2:}} Whenever $b_m <  b_n$.\\
For  this case we partition the set $N \subset \mathcal{I}$ as $N=N_1 \cup N_2$ and $N_1 \cap N_2=\emptyset$ such that $|N_1| = m$ and $|N_2| = n-m$. We consider the complete bipartite graph $K(\widetilde{P}, \widetilde{Q})$, where $\widetilde{P} = P \cup N_1$ and $\widetilde{Q} = Q \cup M \cup N_2$. Let  $G^*$ be a bi-block graph obtained from $G$ by replacing  the induced subgraph $F\circledcirc H$  with  $K(\widetilde{P}, \widetilde{Q})$. Thus, by Eq.~\eqref{eqn:2}, we obtain that $\mathcal{I}^*=\restr{\mathcal{I}}{G-H}\cup  M \cup N_2 $  is an $\alpha(G^*)$-set and   $\alpha(G^*)= \alpha(G)=\left|\restr{\mathcal{I}}{G-H}\right| +n,$ which implies that  $G^*\in \mathcal{B}(\mathbf{k}, \alpha)$  consists of $b-1$ blocks. Note that, we can obtain the graph $G^*$ from $G$ using the following operations:
\begin{itemize}[noitemsep]
\item[1.] Delete the edges between vertices of  $M$ and  $N_2$.
\item[2.] Add edges between vertices of $N_1$ and $Q \setminus \{v\}$.
\item[3.] Add edges between vertices of $P$ and $N_2$.
\item[4.] Add edges between vertices of $N_1$ and $N_2$.
\item[5.] Add edges between vertices of $M\setminus \{v\}$ and  $P$.
\end{itemize}
Let $A^*$ be the adjacency matrix of $G^*$. Then,
\begin{align*}
& \frac{1}{2} X^t(A^*-A)X = -\sum_{\substack{u \sim w \\ u \in M, w \in N_2}}x_u x_w + \sum_{\substack{u \sim w \\ u \in N_1, w \in Q \setminus \{v\}}}x_u x_w +  \sum_{\substack{u \sim w \\ u \in N_2, w \in P }}x_u x_w \\ &\hspace*{7cm} +  \sum_{\substack{u \sim w \\ u \in N_1, w \in N_2 }}x_u x_w + \sum_{\substack{u \sim w \\ u \in M\setminus \{v\}, w \in P }}x_u x_w\\
&= -(n-m)(mb_m + x_c) b_n + mb_n \sum_{w \in Q \setminus \{v\}} x_w + (n-m) b_n \sum_{w \in P} x_w \\
&\hspace*{7cm} + (n-m) m b_n^2  + b_m(m-1) \sum_{w \in P}  x_w  & [\mbox{By Eq.}\eqref{eqn:3}] \\
					  &= -(n-m)mb_mb_n - (n-m)x_c b_n + mb_n \sum_{w \in Q \setminus \{v\}} x_w + \rho (n-m) b_n x_c \\
					  &\hspace*{7.6cm} + (n-m) m b_n^2 +\rho (m-1)b_m x_c &\text{[Using \textup{(J$1$)}]}\\
					  &= (n-m)\left[ mb_n(b_n-b_m) + (\rho - 1)x_c b_n\right] + mb_n \sum_{w \in Q \setminus \{v\}} x_w +\rho (m-1)b_m x_c .
\end{align*}
Since $b_m <  b_n$ and $\rho  \geq 1$ (by Lemma~\ref{lem:sr_deg}), so using the fact that  $X$ is the Perron  vector of $G$, we have   $X^t(A^*-A)X \geq  0$. Thus, by Min-max theorem, we have  $\rho(G) \leq \rho(G^*)$ and hence the induction hypothesis yields the result.

\underline{\textbf{Case 4:}}  $\mathcal{I} \cap P \neq \emptyset$ and   $\mathcal{I} \cap Q = \emptyset$. For $n \geq m$ or $m=n+1$ , we have  $N \subset \mathcal{I}$. We consider graph  $G^*$ obtained from $G$ by replacing  the induced subgraph $F\circledcirc H$  with  $K(\widetilde{P}, \widetilde{Q})$, where $\widetilde{P} = P \cup N$ and $\widetilde{Q} = Q \cup M$,  which implies that  $\mathcal{I}$  is an $\alpha(G^*)$-set. Thus, arguments similar to the Case $1$  yields the result.

\underline{\textbf{Case 5:}}  $\mathcal{I} \cap P \neq \emptyset$ and   $\mathcal{I} \cap Q = \emptyset$. For   $m >n+1$, we have   $(M\setminus \{v\}) \subset \mathcal{I}$.  We consider all neighbouring blocks of $F=K(P,Q)$, say $B_i=K(R_i,S_i)$ for $1\leq i\leq j$,  connected via cut-vertices to the vertex partition  $P$. Without loss of generality, we assume  $S_i \cap P \neq \emptyset$. For any one of the such   neighbour, if  $\mathcal{I} \cap R_i = \emptyset$, then we consider the graph  $G^*$ which is  obtained from $G$ by replacing  the induced subgraph $F\circledcirc B_i$  with  $K(\widetilde{P}, \widetilde{Q})$, where $\widetilde{P} = P \cup S_i$ and $\widetilde{Q} = Q \cup R_i$. Since $\mathcal{I} \cap P\neq \emptyset$, so  $\mathcal{I}$  is an $\alpha(G^*)$-set and argument similar to the Case 1  leads to the desired  result. If no such neighbours exists, then proceeding inductively we need to look for $B_i$'s neighbours   with similar properties. Since $G$ is a finite graph, either we will reach a neighbour with suitable properties or reach a leaf block does not satisfies requisite properties. For the later case, we find a finite chain of blocks $C_i=K(M_i,N_i)$ for $1\leq i\leq t$ satisfying the following:
\begin{itemize}[noitemsep]
\item[1.] $C_1=H$ and $C_t$ are leaf blocks.
\item[2.] For $i = 1,2,\cdots,t-1$ , the blocks $C_i$ and $C_{i+1}$ are neighbours 
          such that $M_i \cap N_{i+1} \neq \emptyset$.
\item[3.] $\mathcal{I} \cap N_i = \emptyset$ for all $i=1,2,\ldots,t$.
\end{itemize}
Since $C_t$ is a leaf block and is connected to $C_{t-1}$ via a cut-vertex $u(say)$ with $bi_G(u)=2$, so  it can be seen  $\mathcal{I} \cap N_{t-1} = \emptyset$ and $\mathcal{I} \cap N_t = \emptyset$ implies that  $|M_t|> |N_t|$. Now, if  we begin with the leaf block $C_t$, then  this case is analogous to the Case 3. Hence the desired result follows.

Moreover, by  Lemma~\ref{lem:two_blocks} and combining all the above cases, the maximum spectral radius $\rho(G)$ among  all graphs $G$ in $\mathcal{B}(\mathbf{k}, \alpha)$  with $bi_{G}(u)=2$ for all cut-vertex $u$ in $G$ is uniquely attained for the complete bipartite graph $K_{\alpha, \mathbf{k}-\alpha}$.
\end{proof}

\begin{lem}\label{lem:star}
If $G \in \mathcal{B}(\mathbf{k},\alpha)$, then there exists a bi-block graph  $G^* \in \mathcal{B}(\mathbf{k},\alpha)$ with $bi_{G^*}(u)=2$ for all cut-vertex $u$ in $G^*$ such that $\rho(G) \leq \rho(G^*)$.
\end{lem}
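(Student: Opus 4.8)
The plan is to reduce to Lemma~\ref{lem:multi_blocks} by repeatedly ``merging'' blocks around a cut-vertex of block index at least $3$, each merge strictly increasing $\rho$ while keeping the graph in $\mathcal{B}(\mathbf{k},\alpha)$ and decreasing the number of blocks; induction on the number of blocks then finishes, because once no cut-vertex has block index $\ge 3$, every cut-vertex has block index exactly $2$ (so we may take $G^{*}=G$ in the base case). The merge I will use is always the same construction: pick a set $\mathcal{C}$ of blocks of $G$ that induces a subtree of the block--cut tree, let $W=\bigcup_{B\in\mathcal{C}}V(B)$, and replace $G[W]$ by the complete bipartite graph on the two colour classes that $W$ inherits from the (unique) bipartition of $G$, leaving the rest of $G$ unchanged. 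Since only edges within the global bipartition are added, the new graph $G'$ is bipartite with the same two colour-class sizes, so $|V(G')|=\mathbf{k}$ and $\alpha(G')=\alpha(G)$; since $\mathcal{C}$ induces a subtree, the only vertices at which $W$ meets the rest of $G$ are cut-vertices of $G$ lying in $W$, and they remain cut-vertices, so $G'$ is again a bi-block graph whose new block is $G'[W]$, now with $|\mathcal{C}|-1$ fewer blocks in total; and if the construction actually adds an edge, Lemma~\ref{lem:sr_edge} gives $\rho(G)<\rho(G')$. The one remaining thing to arrange is that the chosen cut-vertex $v$ (with $bi_G(v)\ge 3$) lies in at least two blocks of $\mathcal{C}$, since then $bi_{G'}(v)=bi_G(v)-1$, while an easy count on the block--cut tree shows that no vertex's block index can increase.

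Choosing $\mathcal{C}$ splits into two cases. If some block $B_{1}$ at $v$ is not a single edge, put $\mathcal{C}=\{B_{1},B_{2}\}$ with $B_{2}$ any other block at $v$. Because $v$ lies on the same colour side of every block containing it, the side of $W$ through $v$ comes from $M_{B_{1}}$ and already has $\ge 2$ vertices, while the other side has $\ge 1+1$ vertices (the $N$-parts of $B_{1}$ and $B_{2}$ are disjoint), so $G'[W]$ is $2$-connected and at least one edge — from $M_{B_{1}}\setminus\{v\}$ to $N_{B_{2}}$ — is new. If instead every block at $v$ is a single edge, then $bi_G(v)=d_G(v)\ge 3$, and since $G$ is a bi-block graph it is not a star, so $v$ has a neighbour $a$ that is a cut-vertex; pick a block $D\ne\{v,a\}$ containing $a$ and a further neighbour $a'$ of $v$, and set $\mathcal{C}=\{\{v,a'\},\{v,a\},D\}$. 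These three blocks form the path $\{v,a'\}-v-\{v,a\}-a-D$ in the block--cut tree, $v$ lies in two of them, both colour sides of $W$ have at least two vertices (one from $\{v\}$ together with the $N$-part of $D$, one from $\{a'\}$ together with the side of $D$ containing $a$), and again at least one new edge appears (e.g.\ from $a'$ to the $N$-part of $D$).

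The genuine obstacle is exactly this second case: the naive move of merging two blocks at $v$ does nothing when both are edges, since their union $K_{1,2}$ is a star, hence not a block, so no actual merging takes place. Routing through a cut-vertex neighbour $a$ of $v$ repairs this, and the existence of such an $a$ is precisely where the convention ``a star is not a bi-block graph'' is used (equivalently, $\mathbf{k}-\alpha\ge 2$ for every graph in the class). An alternative would be a Perron-vector grafting argument (transplanting the branch at $v$ of smaller Perron weight onto the larger one), but the edge-addition argument above needs no spectral computation. Beyond the two cases, the work is purely combinatorial bookkeeping with the block--cut tree: checking that $\mathcal{C}$ induces a subtree, that the vertices $v$ and $a$ interior to the merged region and the vertices where $W$ is attached to the rest of $G$ behave as claimed so that $G'[W]$ is a genuine block, and that no block index goes up.
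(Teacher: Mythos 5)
Your overall scheme---repeatedly merge blocks at a cut-vertex of block index at least $3$ into a single complete bipartite block, gaining edges and losing a block each time, then induct---is the same as the paper's, but the justification of the key invariant is wrong. You claim that because the merge only adds edges compatible with the global bipartition of $G$, the new graph $G'$ has the same colour-class sizes and \emph{therefore} $\alpha(G')=\alpha(G)$. The independence number of a connected bipartite graph is not determined by its colour-class sizes, and adding bipartition-respecting edges can strictly decrease $\alpha$. Concretely, let $G$ be the double star on $8$ vertices with adjacent centres $c_1,c_2$ and pendant vertices $a_1,a_2,a_3$ at $c_1$ and $b_1,b_2,b_3$ at $c_2$: this is a bi-block graph (every block is $K_{1,1}$), its colour classes both have size $4$, and its unique maximum independent set is the six pendant vertices, so $\alpha(G)=6$. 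Here $bi_G(c_1)=4$ and every block at $c_1$ is an edge, so your second case applies with $a=c_2$, $a'=a_1$, $D=\{c_2,b_1\}$; the merge replaces the induced path $a_1c_1c_2b_1$ by $K(\{c_1,b_1\},\{a_1,c_2\})$, i.e.\ adds the single edge $a_1b_1$. The colour classes are still $4$ and $4$, but now every independent set avoiding $c_1,c_2$ misses one of $a_1,b_1$, so $\alpha(G')=5$ and $G'\notin\mathcal{B}(8,6)$: the induction collapses because the target class has changed. The same objection hangs over your first case as well, since nothing in the construction prevents a new edge from joining two vertices that lie in every $\alpha(G)$-set.

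What is missing is precisely the mechanism the paper uses to steer the merge: fix an $\alpha(G)$-set $\mathcal{I}$ \emph{first}. Since each block $B_i=K(M_i,N_i)$ at $v$ is complete bipartite, $\mathcal{I}$ is disjoint from at least one of $M_i,N_i$; taking three blocks at $v$ with $v$ placed consistently (say $v\in N_i$ for all $i$) and applying pigeonhole, two of them miss $\mathcal{I}$ on the \emph{same} side, and merging exactly those two adds no edge with both endpoints in $\mathcal{I}$, so $\mathcal{I}$ survives and $\alpha$ is preserved. Your observation that merging two pendant edge-blocks at $v$ yields $K_{1,2}$, which is not $2$-connected and contributes no new edge, is a genuine subtlety (one the paper passes over silently), but your repair of routing through a neighbouring cut-vertex is exactly where the counterexample above lives; any such repair must still be chosen relative to $\mathcal{I}$, not merely relative to the bipartition.
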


\begin{proof}
Let $v$ be a cut-vertex of $G$ with $bi_G(v) = t$, where $t \geq 3$. Let $B_i = K(M_i,N_i)$;   $i=1,2,3$ be any three neighbours connected via the cut-vertex $v$ such that $v \in N_1 \cap N_2 \cap N_3$. Let $\mathcal{I}$ be an  $\alpha(G)$-set.  If $V(B_i)\cap \mathcal{I}\neq \emptyset$ for all  $i=1,2,3$, then either $M_i\cap  \mathcal{I}\neq \emptyset $ or $N_i\cap  \mathcal{I}\neq \emptyset $. Thus by pigeonhole principle,  there exist $ i,j \in \{1,2,3\}$ such that either $\mathcal{I} \cap N_i = \emptyset$ and $\mathcal{I} \cap N_j = \emptyset$ or  $\mathcal{I} \cap M_i = \emptyset$ and $\mathcal{I} \cap M_j = \emptyset$. Let us consider  a bi-block  graph $G^*$   obtained from $G$ by replacing the induced subgraph $B_i \circledcirc B_j$ with  $K( \widetilde{M}, \widetilde{N})$, where $\widetilde{M} = M_i \cup M_j$ and $\widetilde{N} = N_i \cup N_j$. It is easy to see that, $\mathcal{I}$ is an $\alpha(G^*)$-set and $bi_{G^*} (v) = t-1$. By Lemma~\ref{lem:sr_edge}, we have $\rho(G) \leq \rho(G^*)$. Hence proceeding inductively the  result follows.
If  $V(B_{i_0})\cap \mathcal{I} = \emptyset$ ({\it i.e.} $M_{i_0}\cap  \mathcal{I} = \emptyset $ and $N_{i_0}\cap  \mathcal{I} = \emptyset $) for some  $i_0 \in \{1,2,3\}$, then  for  $j\neq {i_0}$ and choosing $K( \widetilde{M}, \widetilde{N})$, where $\widetilde{M} = M_{i_0} \cup M_j$ and $\widetilde{N} = N_{i_0} \cup N_j$, similar argument yields the desired result.
\end{proof}

Next we state the main result of the article (without proof) which maximizes the spectral radius  for the class   $ \mathcal{B}(\mathbf{k},\alpha)$ and the proof follows from Lemmas~\ref{lem:multi_blocks} and~\ref{lem:star}.

\begin{theorem}
If $G \in \mathcal{B}(\mathbf{k},\alpha)$, then $\rho(G) \leq \rho(K_{\alpha, \mathbf{k} - \alpha})$ and equality holds if and only if $G = K_{\alpha, \mathbf{k} - \alpha}$.
\end{theorem}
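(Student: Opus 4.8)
The plan is to deduce the Theorem from the two preceding lemmas by a short reduction argument. Given an arbitrary $G \in \mathcal{B}(\mathbf{k},\alpha)$, I would first invoke Lemma~\ref{lem:star} to produce a graph $G^* \in \mathcal{B}(\mathbf{k},\alpha)$ in which every cut-vertex has block index exactly $2$, and which satisfies $\rho(G) \le \rho(G^*)$. Then I would apply Lemma~\ref{lem:multi_blocks} to $G^*$, which gives $\rho(G^*) \le \rho(K_{\alpha,\mathbf{k}-\alpha})$. Chaining the two inequalities yields $\rho(G) \le \rho(K_{\alpha,\mathbf{k}-\alpha})$, which is the first assertion.

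For the equality characterization, I would argue as follows. If $G = K_{\alpha,\mathbf{k}-\alpha}$ then equality obviously holds. Conversely, suppose $\rho(G) = \rho(K_{\alpha,\mathbf{k}-\alpha})$. If $G$ has a cut-vertex $v$ with $bi_G(v) \ge 3$, then Lemma~\ref{lem:star} yields a strictly larger spectral radius at one of the intermediate steps (the edge-addition step uses Lemma~\ref{lem:sr_edge}, which is strict), so in fact $\rho(G) < \rho(K_{\alpha,\mathbf{k}-\alpha})$, a contradiction; hence every cut-vertex of $G$ has block index $2$. But then Lemma~\ref{lem:multi_blocks} applies directly to $G$ itself, and its equality clause forces $G = K_{\alpha,\mathbf{k}-\alpha}$.

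The only delicate point is making sure the strictness is genuinely available when $G \ne K_{\alpha,\mathbf{k}-\alpha}$: one must check that if $G$ is not already the complete bipartite graph then at least one of the reduction steps in Lemmas~\ref{lem:star} and~\ref{lem:multi_blocks} is a proper edge addition (or, in the cases handled via the Min-max / Perron-vector argument, that the relevant quadratic form is strictly positive, not merely nonnegative). I would phrase this by noting that $K_{\alpha,\mathbf{k}-\alpha}$ is the unique element of $\mathcal{B}(\mathbf{k},\alpha)$ with a single block, so any $G \ne K_{\alpha,\mathbf{k}-\alpha}$ has at least two blocks and therefore a cut-vertex; the reduction then strictly decreases either the number of cut-vertices of high block index or the number of blocks while never decreasing $\rho$, and the base case $b = 2$ of Lemma~\ref{lem:two_blocks} is already strict. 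Since this is exactly the content already proved in the two lemmas, the Theorem follows with no further computation.

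\begin{proof}
Let $G \in \mathcal{B}(\mathbf{k},\alpha)$. By Lemma~\ref{lem:star}, there is a bi-block graph $G^* \in \mathcal{B}(\mathbf{k},\alpha)$ with $bi_{G^*}(u) = 2$ for every cut-vertex $u$ of $G^*$ and $\rho(G) \le \rho(G^*)$. By Lemma~\ref{lem:multi_blocks}, $\rho(G^*) \le \rho(K_{\alpha,\mathbf{k}-\alpha})$, with equality if and only if $G^* = K_{\alpha,\mathbf{k}-\alpha}$. Combining the two inequalities gives $\rho(G) \le \rho(K_{\alpha,\mathbf{k}-\alpha})$.

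It remains to settle the equality case. If $G = K_{\alpha,\mathbf{k}-\alpha}$, equality clearly holds. Conversely, suppose $\rho(G) = \rho(K_{\alpha,\mathbf{k}-\alpha})$. If some cut-vertex $v$ of $G$ had $bi_G(v) \ge 3$, then the reduction in Lemma~\ref{lem:star} would add at least one edge, so by Lemma~\ref{lem:sr_edge} we would obtain $\rho(G) < \rho(G^*) \le \rho(K_{\alpha,\mathbf{k}-\alpha})$, contradicting the assumed equality. Hence $bi_G(u) = 2$ for every cut-vertex $u$ of $G$, and Lemma~\ref{lem:multi_blocks} applies to $G$ itself; its equality clause forces $G = K_{\alpha,\mathbf{k}-\alpha}$. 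This completes the proof.
\end{proof}
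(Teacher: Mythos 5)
Your argument is exactly the paper's: the theorem is stated there without proof, with the remark that it follows from Lemmas~\ref{lem:multi_blocks} and~\ref{lem:star}, and that chaining is precisely what you carry out. Your extra care over the equality case --- forcing every cut-vertex down to block index $2$ via strict edge additions and then invoking the equality clause of Lemma~\ref{lem:multi_blocks} --- supplies detail the paper omits and is sound, up to a caveat already present inside Lemma~\ref{lem:star} itself (if the two blocks merged at $v$ are both single edges with $v$ on the singleton side of each, the merge adds no edge; this degeneracy is a defect of that lemma's proof rather than of your deduction from its statement).
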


\noindent{ \textbf{\Large Acknowledgements}}: We sincerely thank two anonymous referees for their critical
 reading of the manuscript and for several helpful suggestions which have immensely helped us in getting the article to its present form.  Sumit Mohanty would like to thank the Department of Science and Technology, India, for financial support through the projects MATRICS (MTR/2017/000458).

\small{

}

\end{document}